\theoremstyle{plain}
\newtheorem{theorem}{Theorem}[section]
\newtheorem{lemma}[theorem]{Lemma}
\newtheorem{corollary}[theorem]{Corollary}
\newtheorem{proposition}[theorem]{Proposition}
\theoremstyle{definition}
\newtheorem{example}[theorem]{Exemple}
\newtheorem{remark}[theorem]{Remark}
\theoremstyle{remark}
\newcommand{\rmd}{\mathrm{d}}
\begin{document}

\title{The coupling method in extreme value theory}
\author{Benjamin Bobbia$^*$, Clément Dombry$^*$, Davit Varron\footnote{Universit{\'e} Bourgogne Franche-Comt{\'e}, Laboratoire de Math{\'e}matiques de Besan\c{c}on, UMR CNRS 6623, 16 route de Gray,  25030 Besan{\c c}on Cedex, France.}}
\maketitle
\date{}

\begin{abstract}
A coupling method is developed for univariate extreme value theory, providing an alternative to the use of the tail empirical/quantile processes. Emphasizing the Peak-over-Threshold approach that approximates the distribution above high threshold by the Generalized Pareto distribution, we compare the empirical distribution of exceedances and the empirical distribution associated to the limit Generalized Pareto model and provide sharp bounds for their Wasserstein distance in the second order Wasserstein space. As an application, we recover standard results on the asymptotic behavior of the Hill estimator, the Weissman extreme quantile estimator or the probability weighted moment estimators, shedding some new light on the theory.
\end{abstract}

\section{Introduction}
The purpose of extreme value theory (EVT) is to make statistical inference on the tail region of a probability distribution, e.g. assess rare event probabilities or high order quantiles. In the extreme regime of interest, only a limited number of observations are usually available and classical non parametric methods break down due to the lack of relevant data. To circumvent this issue, extreme value theory assumes some kind of regularity in the tail of the distribution that makes extrapolation possible and allows to exploit moderately high observations for inference in the extreme regime. In the univariate setting, this results in semi-parametric methods where the distribution tail is approximated by a parametric model such as the generalized extreme value (GEV) or generalized Pareto (GP) distribution. 
 Therefore, EVT is essentially an asymptotic theory where the statistical analysis focuses on $k$ extreme observations in a sample of size $n$, for an intermediate sequence $k=k(n)$ satistfying $k\to\infty$ and $k/n\to 0$ as $n\to\infty$.
 
 In the Peak-over-Threshold (PoT) method, the $k$ largest order statistics are considered; suitably rescaled, they follow approximately the GP distribution and the probability weighted moment (PWM) or maximum likelihood (ML) are classically used to estimate the tail parameters and extreme quantiles
 (see e.g. \cite{D98}, \cite{dHF06} Section 2.3).
In the block maxima (BM) method, the data sample is divided into $k$ blocks of size $m=\lfloor n/k\rfloor$ and the suitably rescaled block maxima follow approximately a GEV distribution whose parameters can again be estimated with PWM or ML method \citep{FdH15,BS17,BS18,DF19}. In both cases, one has to deal with the fundamental misspecification inherent with EVT: the extreme sample follows only approximately the limiting model (GEV or GP). This framework also raises the difficult practical issue of the choice of the effective sample size $k$: larger values of $k$ provide a smaller estimation variance, but also a poorer approximation by the limit model leading to a higher estimation bias.
There is a rich literature on methods for threshold selection and bias reduction but we do not address these issues in the present paper. For a general background on extreme value statistics, the reader should refer to the monographs by \cite{BGTS04} or \cite{dHF06}. 

Many results in EVT rely on the theory of empirical processes and use the tail empirical process or the tail quantile process. The purpose of the present paper is to develop an alternative approach emphasizing coupling arguments between the effective extreme sample and an ideal sample from the limiting model. Using the formalism of Wasserstein spaces, we obtain quantitative results measuring the quality of the approximation of the pre-limit model by the limiting model. As a first illustration, we give (in the unbiased case) a simple and elegant proof of the asymptotic normality of the celebrated Hill estimator \citep{H75} together with a non asymptotic upper bound for the Wasserstein distance to Gaussianity, see Corollary \ref{cor:Hill_AN}.

The philosophy of the coupling method in EVT is that, in the limit GP or GEV model, the behavior of the statistic of interest is often much simpler to analyze. The corresponding result in the PoT or BM framework taking into account the misspecification inherent to EVT is often much more difficult to prove and usually requires involved empirical process theory with weighted norms. We present here how the coupling method can be used to carry over the result from the limit model to the pre-limit model using Wasserstein distance estimates, which works quite smoothly under Lipschitz assumptions. 
To be more specific, the asymptotic normality of the Hill estimator for Pareto random variables is a straightforward application of the central limit theorem. The PWM based estimators in the GEV and GP models were considered by \cite{HWW85} and \cite{HW87} respectively and their analysis relies on the general theory of $L$-estimation for i.i.d. samples. Using the coupling method, these results can be carried over to the PoT or BM framework quite easily, offering an original and conceptually important point of view. 

Let us mention that the last decades have seen a fast development of the use of coupling methods, Wasserstein spaces and optimal transport in statistics. The reader should refer to the excellent review by \cite{PZ19} and the reference therein. In this paper, we see for the first time how these insightful and powerful notions apply successfully in EVT.

The structure of the present paper is as follows. In Section~\ref{sec:2}, we introduce the required background on coupling theory and Wasserstein spaces. Then, in a general framework not related to EVT, we state in Theorem~\ref{theo:wasserstein-emp-measure} the equality of the Wasserstein distance (in the second order Wasserstein space) between the distributions of the empirical measures of two i.i.d. samples with the same size and the Wasserstein distance (in the first order Wasserstein space) of the underlying distributions generating the samples. The proof uses elementary coupling arguments together with the more subtle Kantorovitch duality \citep[Chapter 5]{V09}. We believe this result is quite powerful and may find application in many different situations. In Section~\ref{sec:3}, we see how it applies to EVT, mostly in the PoT framework, and introduce some background on EVT. In Section~3.2, for the sake of clarity, we first consider the heavy-tailed case and provide a sharp estimate between the empirical measure of observations above high-threshold and the empirical measure associated with i.i.d. Pareto observations (Theorem~\ref{thm:Wass_dist_Pi_EVT}). Statistical consequences for the Hill estimator and Weissman extreme quantile estimates are discussed in Corollaries~\ref{cor:Hill_AN} and~\ref{cor:Weissman_AN}. Further extensions are considered in Section~3.3: the asymptotic regime where bias occurs, the generalization to all domain of attractions and some results in the BM framework. All the proofs are gathered in Section~4.

We finally introduce some notations used throughout the paper. Given two real-valued sequences $(u_n)_{n\geq 1}$ and $(v_n)_{\geq 1}$, the notation $u_n=o(v_n)$ (respectively $u_n=O(v_n)$) means that $u_n=\varepsilon_n v_n$ for some sequence $(\varepsilon_n)_{n\geq 1}$ converging to $0$ (resp. bounded). Similarly, given two sequences of random variables $(U_n)_{n\geq 1}$ and $(V_n)_{n\geq 1}$, the notation $U_n=o_P(V_n)$ (resp. $U_n=O_P(V_n)$) means that $U_n=\varepsilon_n V_n$ for some sequence of random variables $(\varepsilon_n)_{n\geq 1}$ converging to $0$ in probability (resp. bounded in probability). The notations $\stackrel{d}=$ and $\stackrel{d}\longrightarrow$ stand respectively for equality in distribution and convergence in distribution. All random variable are defined on some underlying probability space $(\Omega,\mathcal{F},\mathbb{P})$ and for $p\in [1,\infty]$,  we denote by $\|\cdot\|_{L^p}$ the usual norm on $L^p(\Omega,\mathcal{F},\mathbb{P})$.

\section{Coupling and sampling}\label{sec:2}
We introduce some background on coupling theory and Wasserstein spaces as well as general results about coupling and sampling. More precisely, we evaluate the Wasserstein distance between the empirical distributions of two independent and identically distributed (i.i.d.) samples and between statistics thereof. 

In the following, we consider a metric space $(\mathcal{X},d)$ endowed with its Borel $\sigma$-algebra $\mathcal{B}(\mathcal{X})$. We denote by $\mathcal{M}(\mathcal{X})$ the set of probability measures on $(\mathcal{X},\mathcal{B}(\mathcal{X}))$ and by $\varepsilon_x$ the Dirac mass at $x\in\mathcal{X}$. 

\subsection{Background on coupling and Wasserstein spaces}
The coupling method has long been an important tool in probability theory, see e.g. the monographs by \cite{L92} and \cite{T00}. A coupling between two probability measures $P_1,P_2\in \mathcal{M}(\mathcal{X})$ is a pair of $\mathcal{X}$-valued random variables $X_1,X_2$ defined on a common probability space and such that $X_1\rightsquigarrow P_1$ and $X_2\rightsquigarrow P_2$.
This notion of coupling is crucial in the definition of the Wasserstein distance which is a powerful tool in statistics; it is for instance central in the analysis of the bootstrap by \cite{BF81}. We provide below some basic properties of Wasserstein spaces, more details are to be found in \cite{V09}. 

The Wasserstein distance of order $p\in [1,\infty]$ between two probability measures $P_1,P_2\in \mathcal{M}_1(\mathcal{X})$ is defined by
\[
W_p(P_1,P_2)=\inf\Big{\{ }\| d(X_1,X_2)\|_{L^p}\,:\, X_i\rightsquigarrow P_i,\ i=1,2\Big{\}}.
\]
When $(\mathcal{X},d)$ is complete and separable, the infimum in this definition is achieved and there exist optimal couplings, i.e. couplings $(X_1,X_2)$ such that $W_p(P_1,P_2)=\| d(X_1,X_2)\|_{L^p}$. The Wasserstein space of order $p$ is defined as the set of probability measures 
\[
\mathcal{W}_p(\mathcal{X})=\left\{P\in\mathcal{M}(\mathcal{X}) \,:\, \int_{\mathcal{X}}d(x_0,x)^pP(\rmd x)<\infty \right\},
\]
where $x_0\in\mathcal{X}$ denotes some origin, i.e. some fixed point whose choice is irrelevant. Note that $(\mathcal{W}_p(\mathcal{X}),W_p)$ is itself a metric space, which is complete and separable if both $p\in[1,\infty)$ and $(\mathcal{X},d)$ is itself complete and separable. Also note that $W_p(P_n,P)\to 0$ is equivalent to the convergence in distribution $P_n\stackrel{d}{\to} P$ together with the convergence of moments \[
\int_{\mathcal{X}}d(x_0,x)^p P_n(\rmd x)\to \int_{\mathcal{X}}d(x_0,x)^pP(\rmd x)\quad \mbox{ for some (all) $x_0\in\mathcal{X}$}.
\]

\begin{example}\label{ex:1} When $\mathcal{X}=\mathbb{R}^d$ is endowed with the Euclidean norm, the Wasserstein space of order $p<\infty$ simply consists in all probability measures with finite moment of order $p$, i.e.
\[
\mathcal{W}_p(\mathbb{R}^d)=\left\{P\in\mathcal{M}(\mathbb{R}^d) \,:\, \int_{\mathbb{R}^d}\|x\|^pP(\rmd x)<\infty \right\}.
\]
For $p=\infty$, $\mathcal{W}_\infty(\mathbb{R}^d)$ is the set of probability measures with bounded support. In general, the Wasserstein distance between probability measures on $\mathbb{R}^d$ cannot be computed explicitly, except in the unidimensional case. When $d=1$, optimal couplings are provided by the probability integral transform. For $P\in\mathcal{M}(\mathbb{R})$, the random variable $X=F^\leftarrow(U)$ has distribution $P$, where $F^\leftarrow$ denotes the quantile function of $P$ and $U$ a random variable with uniform distribution on $[0,1]$. Using obvious notations, $X_1=F_1^\leftarrow(U)$ and $X_2=F_2^\leftarrow(U)$ is hence a coupling between $P_1,P_2\in\mathcal{M}(\mathbb{R})$ and it turns out that this coupling is optimal so that
\[
W_p(P_1,P_2)=
\left(\int_0^1|F_1^\leftarrow(u)-F_2^\leftarrow(u)|^p \,\mathrm{d}u\right)^{1/p},\quad \mbox{if $p\in [1,\infty)$},
\]
and $W_\infty(P_1,P_2)=\sup_{u\in (0,1)} |F_1^\leftarrow(u)-F_2^\leftarrow(u)|$.
\end{example}

\begin{example}\label{ex:2nd-order-Wasserstein-space}
Given a metric space $(\mathcal{X},d)$, the Wasserstein space $(\mathcal{W}_p(\mathcal{X}),W_p)$ is also a metric space so that we can iterate the construction and consider the \textit{second order Wasserstein space} $\mathcal{W}_p(\mathcal{W}_p(\mathcal{X}))$. An element $\mathcal{P}\in\mathcal{W}_p(\mathcal{W}_p(\mathcal{X}))$ can be seen as the distribution of a random measure $\Pi$ on $(\mathcal{X},d)$ satisfying the integrability condition $\mathbb{E}\left[ \int_{\mathcal{X}}d(x_0,x)^p\Pi(\rmd x)\right] <\infty$. In the next section, we will consider the empirical distribution of random samples so that random measures and second order Wasserstein spaces will naturally arise. The Wasserstein distance on $\mathcal{W}_p(\mathcal{W}_p(\mathcal{X}))$ is denoted by $W_p^{(2)}$ and defined by
\[
W_p^{(2)}(\mathcal{P}_1,\mathcal{P}_2)=\inf\Big{\{}\|W_p(\Pi_1,\Pi_2)\|_{L^p}
\,:\, \Pi_i\rightsquigarrow \mathcal{P}_i,\ i=1,2 \Big{\}}.
\]
Note that \cite{LGL19} have considered second order Wasserstein spaces when discussing Wasserstein barycenters for random measures.
\end{example}

\subsection{Wasserstein distance between empirical distributions}
Consider two i.i.d. samples $X_1,\ldots,X_n$ and $X_1^*,\ldots,X_n^*$, with size $n\geq 1$, taking values in $(\mathcal{X},d)$ and with distribution $P$ and $P^*$ respectively. The corresponding empirical distributions are defined by
\[
\Pi_n=\frac{1}{n}\sum_{i=1}^n \varepsilon_{X_i}
\quad\mbox{and}\quad
\Pi_n^*=\frac{1}{n}\sum_{i=1}^n \varepsilon_{X_i^*}.
\]
These are random measures on $(\mathcal{X},d)$ satisfying 
\[
\mathbb{E}\left[ \int_{\mathcal{X}}d(x_0,x)^p\Pi_n(\rmd x)\right]=\int_{\mathcal{X}}d(x_0,x)^pP(\rmd x).
\]
and similarly with $\Pi_n^*$ and $P^*$. The expectation is finite if and only if $P\in\mathcal{W}_p(\mathcal{X})$ and, according to Example~\ref{ex:2nd-order-Wasserstein-space}, the distribution of $\Pi_n$ is then an element of the second order Wasserstein space $\mathcal{W}_p(\mathcal{W}_p(\mathcal{X}))$. The following Theorem states that the Wasserstein distance between the distributions of the empirical measures $\Pi_n$ and $\Pi_n^*$ is equal to the Wasserstein distance between the distributions $P$ and $P^*$ that have generated the samples.

\begin{theorem}\label{theo:wasserstein-emp-measure}
Assume $(\mathcal{X},d)$ is complete and separable. Let $p\in[1,\infty)$ and $P,P^*\in\mathcal{W}_p(\mathcal{X})$. Then,
\begin{equation}\label{eq:wasserstein-emp-measure-1}
W_p^{(2)}\Big{(}P_{\Pi_n},P_{\Pi_n^*}\Big{)}= W_p(P,P^*).
\end{equation}
\end{theorem}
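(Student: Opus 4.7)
The plan is to prove the two inequalities separately, which together yield the desired equality.

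\textbf{Upper bound.} I would start from an optimal coupling $(X,X^*)$ of $(P,P^*)$, whose existence is guaranteed by completeness and separability of $(\mathcal{X},d)$. Drawing $n$ independent copies $(X_i,X_i^*)_{i=1}^n$ of this coupling yields a joint realisation of $(\Pi_n,\Pi_n^*)$ with the correct marginal laws. The identity permutation supplies a transport plan between these $n$-atomic measures of equal mass $1/n$, so that almost surely
\[
W_p(\Pi_n,\Pi_n^*)^p \;\leq\; \frac{1}{n}\sum_{i=1}^n d(X_i,X_i^*)^p.
\]
Taking expectations and using that $\mathbb{E}[d(X_i,X_i^*)^p] = W_p(P,P^*)^p$ gives $\mathbb{E}[W_p(\Pi_n,\Pi_n^*)^p]\leq W_p(P,P^*)^p$ and hence $W_p^{(2)}\leq W_p(P,P^*)$.

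\textbf{Lower bound.} Fix any coupling $(\Pi,\Pi^*)$ with $\Pi\rightsquigarrow P_{\Pi_n}$ and $\Pi^*\rightsquigarrow P_{\Pi_n^*}$. A direct computation shows that the intensity measures of these random empirical measures are $\mathbb{E}[\Pi(A)]=P(A)$ and $\mathbb{E}[\Pi^*(A)]=P^*(A)$. I would then invoke Kantorovitch duality \citep[Chapter 5]{V09} to express
\[
W_p(P,P^*)^p = \sup_{(\varphi,\psi)} \left\{ \int \varphi\,dP + \int \psi\,dP^*\right\},
\]
the supremum running over admissible pairs $(\varphi,\psi)\in L^1(P)\times L^1(P^*)$ with $\varphi(x)+\psi(y)\leq d(x,y)^p$. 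For any such pair and any transport plan $\gamma$ between $\Pi$ and $\Pi^*$, integrating the pointwise inequality produces
\[
\int\varphi\,d\Pi+\int\psi\,d\Pi^* \leq \int d(x,y)^p\,d\gamma(x,y),
\]
so that minimising over $\gamma$ yields $\int\varphi\,d\Pi+\int\psi\,d\Pi^*\leq W_p(\Pi,\Pi^*)^p$ almost surely. Taking expectation and applying Fubini (the $\mathcal{W}_p$ hypothesis on $P,P^*$ supplying the required integrability), the left side becomes $\int\varphi\,dP+\int\psi\,dP^*$, whence
\[
\int\varphi\,dP+\int\psi\,dP^* \leq \mathbb{E}[W_p(\Pi,\Pi^*)^p].
\]
A supremum over admissible pairs followed by an infimum over couplings $(\Pi,\Pi^*)$ gives $W_p(P,P^*)^p\leq W_p^{(2)}(P_{\Pi_n},P_{\Pi_n^*})^p$.

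\textbf{Main obstacle.} The upper bound is a concrete construction, but the lower bound requires lifting information from a coupling of random measures back to a coupling at the base level of $P$ and $P^*$. A naive route would select, for each $\omega$, an optimal plan $\gamma_\omega$ between the discrete measures $\Pi(\omega)$ and $\Pi^*(\omega)$ and average into a coupling of $P$ and $P^*$; this is feasible since the problem reduces to finite permutation matching on $n$ equally-weighted atoms, but still requires a measurable-selection argument. The duality route sketched above side-steps this selection issue cleanly, at the modest cost of verifying integrability of the dual potentials, which is exactly what the $\mathcal{W}_p$ hypothesis guarantees.
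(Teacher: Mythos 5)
Your proof is correct and follows essentially the same route as the paper: an elementary coupling argument for the upper bound, and Kantorovitch duality on the base space combined with the fact that $\mathbb{E}[\Pi_n]=P$ for the lower bound. The only cosmetic difference is that the paper lifts the dual potentials to $\varphi^\sharp,\psi^\sharp$ on $\mathcal{W}_p(\mathcal{X})$ and applies the easy direction of duality in the second-order Wasserstein space (and treats $p=1$ via Kantorovich--Rubinstein separately), whereas you apply the easy direction pointwise at each $\omega$ and then average---these are the same computation, differently packaged.
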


\begin{remark}
As will be stated in the proof, the inequality
\[
W_p^{(2)}\Big{(}P_{\Pi_n},P_{\Pi_n^*}\Big{)}\leq W_p(P,P^*)
\]
always holds true, even with $p=\infty$, and relies on elementary coupling arguments. The assumptions $p<\infty$ and $(\mathcal{X},d)$ complete and separable are required for the converse inequality whose proof uses the Kantorovitch duality from optimal transport, see \citet[Chapter 5]{V09}.
\end{remark}

\begin{remark} Theorem~\ref{theo:wasserstein-emp-measure} can be generalized to samples with random size or weighted samples with random weights. Consider $(X_i)_{i\geq 1}$ and $(X_i^*)_{i\geq 1}$ sequences of i.i.d. random variables with distribution $P$ and $P^*$ respectively. Let $(w_i)_{i\geq 1}$ be a random sequence of non-negative weights summing up to $1$ and independent of the sequences of observations $(X_i)_{i\geq 1}$ and $(X_i^*)_{i\geq 1}$. 
The weighted empirical distributions are defined by
\[
\Pi=\sum_{i\geq 1} w_i \varepsilon_{X_i}
\quad\mbox{and}\quad
\Pi^*=\sum_{i\geq 1} w_i \varepsilon_{X_i^*}
\]
and are random elements in $\mathcal{W}_p(\mathcal{X})$.
Then, Theorem~\ref{theo:wasserstein-emp-measure} still holds and the proof is readily adapted. The case of a sample with random size $N$ corresponds to $w_i=1/N$ for $i\leq N$ and $w_i=0$ otherwise. Multinomial weights naturally appear when considering bootstrap procedures. 
\end{remark}

\subsection{Wasserstein distance between sample statistics}
In many cases, one is interested in statistics $S_n=S(X_1,\ldots,X_n)$ and $S_n^*=S(X_1^*,\ldots,X_n^*)$. Due to the exchangeability of i.i.d. samples, it is natural to consider symmetric statistics that can be written as a functional of the empirical distributions, that is $S_n=S(\Pi_n)$ and $S_n=S(\Pi_n^*)$ -- the same letter $S$ is used without risk of confusion. The following corollary of Theorem~\ref{theo:wasserstein-emp-measure} provides a simple upper bound under a natural Lipschitz condition.
\begin{corollary}\label{cor:lipschitz-statistic}
Let $p\in [1,\infty]$ and $P,P^*\in\mathcal{W}_p(\mathcal{X})$. Let $(\mathcal{Y},\delta)$ be a metric space and $S:\mathcal{W}_p(\mathcal{X})\to \mathcal{Y}$ be a Lipschtiz functionnal with Lipschitz constant $\mathrm{Lip}(S)$.
Then, the statistics $S_n=S(\Pi_n)$ and $S_n^*=S(\Pi_n^*)$ satisfy
\[
W_p\Big{(}P_{S_n},P_{S_n^*}\Big{)}\leq \mathrm{Lip}(S) W_p(P,P^*),
\]
where the Wasserstein distance in the left-hand side is taken on the space $\mathcal{W}_p(\mathcal{Y})$.
\end{corollary}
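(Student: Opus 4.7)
The plan is to build an explicit coupling of $P_{S_n}$ and $P_{S_n^*}$ obtained by pushing forward an optimal coupling of the empirical measures through the functional $S$, and then to use the Lipschitz property to pass from a bound in $W_p^{(2)}$ to a bound in $W_p$.

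First, I would invoke (the ``$\leq$'' part of) Theorem~\ref{theo:wasserstein-emp-measure}, which holds for every $p\in[1,\infty]$ by the preceding remark. Starting from an optimal coupling $(X,X^*)$ of $(P,P^*)$ and sampling $n$ i.i.d.\ copies $(X_i,X_i^*)_{i=1,\ldots,n}$, the associated empirical measures $\Pi_n$ and $\Pi_n^*$ form a coupling of $(P_{\Pi_n},P_{\Pi_n^*})$ for which the diagonal matching of atoms yields
\[
W_p(\Pi_n,\Pi_n^*)\leq \left(\frac{1}{n}\sum_{i=1}^n d(X_i,X_i^*)^p\right)^{1/p}\quad (p<\infty),
\]
and the analogous bound with a maximum when $p=\infty$. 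Taking $L^p$ norms and using that each pair $(X_i,X_i^*)$ is an optimal $W_p$-coupling of $(P,P^*)$ gives
\[
\bigl\|W_p(\Pi_n,\Pi_n^*)\bigr\|_{L^p}\leq W_p(P,P^*).
\]

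Second, I would push this coupling through $S$: since $\Pi_n\rightsquigarrow P_{\Pi_n}$ and $\Pi_n^*\rightsquigarrow P_{\Pi_n^*}$, the pair $(S(\Pi_n),S(\Pi_n^*))=(S_n,S_n^*)$ is a coupling of $(P_{S_n},P_{S_n^*})$ in $\mathcal{Y}\times\mathcal{Y}$. The Lipschitz assumption provides the pointwise inequality
\[
\delta\bigl(S(\Pi_n),S(\Pi_n^*)\bigr)\leq \mathrm{Lip}(S)\,W_p(\Pi_n,\Pi_n^*),
\]
so that taking $L^p$ norms and applying the variational definition of $W_p$ on $\mathcal{W}_p(\mathcal{Y})$ yields
\[
W_p(P_{S_n},P_{S_n^*})\leq \bigl\|\delta(S_n,S_n^*)\bigr\|_{L^p}\leq \mathrm{Lip}(S)\,\bigl\|W_p(\Pi_n,\Pi_n^*)\bigr\|_{L^p}\leq \mathrm{Lip}(S)\,W_p(P,P^*),
\]
which is the claimed bound.

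There is essentially no hard step: the argument is a clean concatenation of (i) the elementary ``$\leq$'' half of Theorem~\ref{theo:wasserstein-emp-measure}, (ii) the definition of Lipschitz continuity in the Wasserstein metric, and (iii) the definition of $W_p$ as an infimum over couplings. The only mild subtlety is that the corollary allows $p=\infty$ while Theorem~\ref{theo:wasserstein-emp-measure} is stated for $p<\infty$; this is handled by noting, as in the remark, that the ``$\leq$'' direction we actually use holds for all $p\in[1,\infty]$ via the diagonal matching above, so the deep Kantorovitch-duality half of Theorem~\ref{theo:wasserstein-emp-measure} is not needed here.
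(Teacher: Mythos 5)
Your proof is correct and follows essentially the same route as the paper's: push a coupling of the empirical measures through $S$, use the Lipschitz bound pointwise, and then bound the $L^p$ norm of $W_p(\Pi_n,\Pi_n^*)$ by $W_p(P,P^*)$ via the elementary ``$\leq$'' direction of Theorem~\ref{theo:wasserstein-emp-measure}. The only structural difference is that the paper factors explicitly through the intermediate bound $W_p(P_{S_n},P_{S_n^*})\leq\mathrm{Lip}(S)\,W_p^{(2)}(P_{\Pi_n},P_{\Pi_n^*})$ by taking an infimum over all couplings of $(P_{\Pi_n},P_{\Pi_n^*})$ before applying the theorem, whereas you inline a single explicit coupling; these are equivalent. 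You are in fact more careful than the paper on the $p=\infty$ case, correctly observing that only the elementary half of the theorem (which holds for all $p\in[1,\infty]$ by the accompanying remark) is needed. One very minor caveat: since the corollary is stated for a general metric space $(\mathcal{X},d)$ without assuming completeness and separability, an exactly optimal coupling of $(P,P^*)$ need not exist; replace it by an $\varepsilon$-optimal coupling and let $\varepsilon\to 0$, exactly as is done in the paper's proof of the upper bound in Theorem~\ref{theo:wasserstein-emp-measure}.
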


In extreme value theory, important examples of Lipschitz statistics are the moments and probability weighted moments, as stated in the next proposition.
\begin{proposition}\label{prop:lipschitz-statistic}
\begin{enumerate}
\item[i)] Let $p\in [1,\infty]$ and $q\in [1,p]$ finite. Consider the moment functional defined, for $\pi\in \mathcal{W}_p(\mathcal{X})$, by
\[
S(\pi)=\Big(\int_{\mathcal{X}}d(x_0,x)^q \pi(\rmd x)\Big)^{1/q}.
\]
Then, $S:\mathcal{W}_p(\mathcal{X})\to\mathbb{R}$ is Lipschitz with $\mathrm{Lip}(S)=1$. 
\item[ii)] Let $p\in [1,\infty]$, $q\in [1,p]$ finite and $r,s\geq 0$. Consider the probability weighted moment functional defined, for $\pi\in \mathcal{W}_p(\mathbb{R})$, by
\[
S(\pi)=\Big(\int_{0}^1 F_\pi^\leftarrow(u)^q u^r(1-u)^s \rmd u\Big)^{1/q}
\]
with $F_\pi^\leftarrow$ the quantile function of $\pi$. Then, $S:\mathcal{W}_p(\mathbb{R})\to\mathbb{R}$ is Lipschitz with $\mathrm{Lip}(S)\leq 1$.
\end{enumerate}
\end{proposition}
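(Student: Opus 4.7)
The plan for both parts is to view $S(\pi)$ as a (weighted) $L^q$-norm of a nonnegative function, apply Minkowski's (reverse triangle) inequality to reduce the comparison $|S(\pi_1)-S(\pi_2)|$ to an $L^q$-norm of a difference, and then promote the resulting $L^q$-bound to an $L^p$-bound using the inclusion $L^p(\nu)\hookrightarrow L^q(\nu)$ valid on any probability space $\nu$ when $q\le p$. The optimal coupling realizing $W_p(\pi_1,\pi_2)$ — any coupling in (i), the quantile coupling in (ii) — then delivers the stated Lipschitz bound.

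For (i), I write $S(\pi_i)=\|d(x_0,X_i)\|_{L^q}$ for $X_i\rightsquigarrow\pi_i$ defined on a common probability space, and combine the reverse triangle inequality in $L^q$ with the pointwise metric bound $|d(x_0,X_1)-d(x_0,X_2)|\le d(X_1,X_2)$ to obtain
\[
|S(\pi_1)-S(\pi_2)|\le\|d(x_0,X_1)-d(x_0,X_2)\|_{L^q}\le\|d(X_1,X_2)\|_{L^q}\le\|d(X_1,X_2)\|_{L^p},
\]
the last inequality being Jensen's on $(\Omega,\mathbb{P})$ with $q\le p$. Taking the infimum over couplings yields $|S(\pi_1)-S(\pi_2)|\le W_p(\pi_1,\pi_2)$, i.e.\ $\mathrm{Lip}(S)\le1$. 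Sharpness of the constant $1$ is witnessed by $\pi_1=\varepsilon_{x_0}$ and $\pi_2=\varepsilon_x$ with $x\ne x_0$, for which both $|S(\pi_1)-S(\pi_2)|$ and $W_p(\pi_1,\pi_2)$ equal $d(x_0,x)$.

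For (ii), I invoke the quantile coupling of Example~\ref{ex:1}: setting $X_i=F_{\pi_i}^\leftarrow(U)$ with $U$ uniform on $(0,1)$, one has $W_p(\pi_1,\pi_2)=\big(\int_0^1|F_{\pi_1}^\leftarrow(u)-F_{\pi_2}^\leftarrow(u)|^p\,\rmd u\big)^{1/p}$. Viewing $S(\pi)$ as the $L^q$-norm of $F_\pi^\leftarrow$ under the finite measure $\rmd\mu(u)=u^r(1-u)^s\,\rmd u$ on $(0,1)$ (valid in the EVT setting where $F_\pi^\leftarrow\ge 0$, so that $F_\pi^\leftarrow(u)^q$ is unambiguous and nonnegative), Minkowski's inequality in $L^q(\mu)$ gives
\[
|S(\pi_1)-S(\pi_2)|\le\Big(\int_0^1|F_{\pi_1}^\leftarrow(u)-F_{\pi_2}^\leftarrow(u)|^q u^r(1-u)^s\,\rmd u\Big)^{1/q}.
\]
Because $r,s\ge 0$ forces $u^r(1-u)^s\le 1$ on $(0,1)$, the weight can be dropped at no cost, bounding the right-hand side by $\big(\int_0^1|F_{\pi_1}^\leftarrow(u)-F_{\pi_2}^\leftarrow(u)|^q\,\rmd u\big)^{1/q}$; Jensen's inequality on $((0,1),\mathrm{Leb})$ with $q\le p$ then concludes with $W_p(\pi_1,\pi_2)$ on the right. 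There is no genuine obstacle here — everything reduces to Minkowski and Jensen — the only mild subtlety being the implicit positivity convention on $F_\pi^\leftarrow$, which is the natural setting for PWM estimators in EVT.
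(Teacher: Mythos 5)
Your proof is correct and follows essentially the same route as the paper: reverse triangle inequality in $L^q$, the bound $u^r(1-u)^s\le 1$ to drop the weight, and monotonicity of $L^p$-norms on probability spaces to pass from a $W_q$- to a $W_p$-bound, with the same $\varepsilon_{x_0}$ versus $\varepsilon_x$ example for sharpness in part (i). Your explicit observation that $F_\pi^\leftarrow(u)^q$ is only unambiguous under a positivity convention on $F_\pi^\leftarrow$ (the natural one in the PWM/EVT setting) is a valid precision that the paper's proof leaves implicit.
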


\section{Wasserstein distance estimates in extreme value theory}\label{sec:3}
\subsection{Background on univariate extreme value theory}
Two main approaches exist in univariate extreme value theory: the Block Maxima (BM) method and the Peaks-over-Threshold (PoT) method. The Fisher-Tipett-Gnedenko theorem \citep{FT28,G43} is the basis of the BM method and states that, if a distribution $F$ satisfies
\begin{equation}\label{eq:cv-GEV}
F^n(a_n\cdot+b_n)\stackrel{d}\rightarrow G(\cdot)
\end{equation}
for some non-degenerate limit distribution $G$ and normalizing sequences $a_n>0$ and $b_n\in\mathbb{R}$, then, up to location and scale, the limit distribution $G$ is necessarily equal to an extreme value distribution $G_\gamma$ defined by
\begin{equation}\label{eq:def-GEV}
G_\gamma(x)=\exp(-(1+\gamma x)^{-1/\gamma}),\quad 1+\gamma x>0. 
\end{equation}
The parameter $\gamma\in\mathbb{R}$ is called the extreme value index. 
The case $\gamma=0$ corresponds to the limit as $\gamma\to 0$ and $G_0$ is the Gumbel distribution $G_0(x)=\exp(-\exp(-x))$. The link with block maxima is that Equation~\eqref{eq:cv-GEV} is equivalent to the convergence of the rescaled maximum 
\[
a_n^{-1}\Big(\max_{1\leq i\leq n} X_i -b_n\Big) \stackrel{d}\longrightarrow G \quad \mbox{as $n\to\infty$},
\]
with $X_1,\ldots,X_n$ i.i.d. random variables with common distribution $F$. When Equation~\eqref{eq:cv-GEV} holds, we say that $F$ belongs to the max-domain of attraction of $G$, noted $F\in\mathrm{D}(G)$. 

The characterization of the domain of attraction of $G_\gamma$ is due to \cite{G43} in the case $\gamma\neq 0$ and to \cite{dH71} in the case $\gamma=0$:  $F\in\mathrm{D}(G_\gamma)$ if and only if the tail quantile function $U(t)=F^\leftarrow(1-1/t)$, $t>1$, satisfies the first order condition
\begin{equation}\label{eq:first-order-condition}
\lim_{t\to\infty} \frac{U(tx)-U(t)}{a(t)} =\frac{x^\gamma-1}{\gamma},\quad x>0,
\end{equation}
for some normalizing function $a>0$.

The Peaks-over-Threshold method focuses on exceedances over a high threshold, that is observations of $X\rightsquigarrow F$ satisfying $X>u$ for $u$ closed to the right endpoint $x^*=\sup\{x:F(x)>1\}$. 
The Balkema-de Haan-Pickands Theorem \citep{BdH74,P75} states that the first order condition is also equivalent to the convergence
\begin{equation}\label{eq:cv-GPD}
\lim_{u\to x^*} \frac{1-F(u+f(u)x)}{1-F(u)} =(1+\gamma x)^{-1/\gamma},\quad 1+\gamma x>0,
\end{equation}
for some positive function $f$. A possible choice is $f(u)=a(1/(1-F(u)))$. In terms of exceedances, this writes 
\begin{equation}\label{eq:cv-GPD2}
P_{\frac{X-u}{f(u)} \mid X>u} \stackrel{d} \longrightarrow H_\gamma \quad \mbox{as $u\to x^*$},
\end{equation}
where $H_\gamma$ is the Generalized Pareto (GP) distribution 
\begin{equation}\label{eq:def-GPD}
H_\gamma(x)=1-(1+\gamma x)^{-1/\gamma},\quad 1+\gamma x>0.
\end{equation}
In the framework of regular variation theory, the study of rates of convergence usually relies on second-order regular variation, see \citet[Chapter B.3]{dHF06}. The so-called second-order condition reads
\begin{equation}\label{eq:second-order-condition2}
\lim_{t\to \infty} \frac{\frac{U(tx)-U(t)}{a(t)}-\frac{x^\gamma-1}{\gamma}}{A(t)} = \Psi_{\gamma,\rho}(x),\quad x>0,
\end{equation}
where $\rho\leq 0$ is the second order parameter, the normalizing function $A$ is regularly varying at infinity with index $\rho$, eventually negative or positive and such that $\lim_{t\to\infty}A(t)= 0$ and
\begin{equation}\label{eq:limit-second-order-condition}
 \Psi_{\gamma,\rho}(x)=\int_{1}^x s^{\gamma-1}\int_1^s u^{\rho-1}\rmd u\,\rmd s.
\end{equation}

\subsection{Analysis of the Peak-over-Threshold method (case $\gamma>0$)}
We analyze the PoT method and, for the sake of clarity, we consider first the case $\gamma>0$. One can then take $a(t)=\gamma U(t)$ so that the first order condition~\eqref{eq:first-order-condition} simplifies into standard regular variation
\begin{equation}\label{eq:RV}
\lim_{t\to\infty} \frac{U(tx)}{U(t)} =x^\gamma,\quad x>0.
\end{equation}
Equation~\eqref{eq:cv-GPD2} stating the convergence in distribution of normalized exceedances to the GP distribution is equivalent to 
\begin{equation}\label{eq:cv-Pareto}
P_{u^{-1}X \mid X>u} \stackrel{d} \rightarrow P_\alpha, \quad \mbox{as $u\to \infty$},
\end{equation}
with $P_\alpha(x)=1-x^{-\alpha}$, $x>1$, the Pareto distribution with index $\alpha=1/\gamma>0$. Furthermore, the second order condition can be simplified into
\begin{equation}\label{eq:second-order-condition}
\lim_{t\to \infty} \frac{\frac{U(tx)-U(t)}{U(t)}-x^\gamma}{A(t)} = x^\gamma \frac{x^\rho -1}{\rho},\quad x>0.
\end{equation}

\subsubsection{Pareto approximation of exceedance above high threshold}
We consider the rate of the convergence \eqref{eq:cv-Pareto} in the Wasserstein space and compare the exceedance distribution $P_{u^{-1}X \mid X>u}$ to the Pareto distribution $P_\alpha$ in $\mathcal{W}([1,\infty))$. Because the Pareto distribution has finite moments of order $p<\alpha$ only, we introduce the logarithmic distance 
\begin{equation}\label{eq:log-dist}
d(x,x')=|\log(x)-\log(x')|,\quad x,x'\in[1,\infty).
\end{equation}
We will see below that this distance is also convenient for analyzing the behavior of the Hill estimator and the Weissman quantile estimator. 

Let $t_0>1$ be such that $U(t_0)>0$. For $t\geq t_0$, we define
\begin{equation}\label{eq:prop-cv-pareto-W}
A_p(t)= \left\{\begin{array}{ll}
 \left(\int_{1}^\infty \left|\log \frac{U(zt)}{z^\gamma U(t)}\right|^p \frac{\rmd z}{z^2}\right)^{1/p}& \mbox{if $p\in [1,\infty)$},\\
 \sup_{z>1} \left|\log \frac{U(zt)}{z^\gamma U(t)}\right| & \mbox{if $p=\infty$}.
\end{array}\right.
\end{equation}
Without loss of generality, we can assume that the random variable $X\rightsquigarrow F$ is given by $X=U(Z)$ where $U(t)=F^\leftarrow (1-1/t)$ is the tail quantile function of $X$ and $Z$ follows a standard unit Pareto distribution.

\begin{proposition}\label{prop:cv-pareto-W} Let $p\in [1,\infty]$ and consider the Wasserstein space $\mathcal{W}_p([1,\infty))$ with underlying distance \eqref{eq:log-dist}.
\begin{enumerate}
\item[i)] For $t\geq t_0$,
\[
W_p(P_{U(t)^{-1}X\mid Z>t},P_\alpha)=A_p(t).
\]
\item[ii)] If $p<\infty$ and $F\in D(G_\gamma)$ with $\gamma>0$, then the function $A_p$ is bounded on $[t_0,\infty)$ and such that $\lim_{t\to\infty} A_p(t)=0$. 
\item[iii)] If the second-order condition~\eqref{eq:second-order-condition} holds, then
\[
\lim_{t\to\infty}\frac{A_p(t)}{A(t)} =c_p(\rho):= \left\{\begin{array}{cl} 
\left(\int_{1}^\infty \left|\frac{z^\rho -1}{\rho}\right|^p \frac{\rmd z}{z^2}\right)^{1/p} & \mbox{if $p\in [1,\infty)$}, \\
1/|\rho| & \mbox{if $p=\infty$ and $\rho<0$}, \\ 
+\infty & \mbox{if $p=\infty$ and $\rho=0$}. 
\end{array}\right.
\]
\end{enumerate}
\end{proposition}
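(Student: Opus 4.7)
For part (i), I would exhibit the optimal coupling in closed form. Since $X=U(Z)$ with $Z$ standard unit Pareto, the conditional law of $Z':=Z/t$ given $Z>t$ is again unit Pareto, with density $z^{-2}$ on $[1,\infty)$. Setting
\[
X_1=\frac{U(tZ')}{U(t)},\qquad X_2=(Z')^{\gamma},
\]
produces a coupling of $P_{U(t)^{-1}X\mid Z>t}$ and $P_\alpha$ (note $(Z')^{\gamma}\sim P_\alpha$ since $\alpha=1/\gamma$). Both $X_1$ and $X_2$ are non-decreasing functions of $Z'$, hence comonotonic; since $x\mapsto\log x$ is an isometry between $([1,\infty),d)$ and $([0,\infty),|\cdot|)$, this comonotonicity makes the pair the quantile-transform coupling on the real line, which is $W_p$-optimal by Example~\ref{ex:1}. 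Computing $\|d(X_1,X_2)\|_{L^p}$ against the density $z^{-2}$ of $Z'$ gives exactly $A_p(t)$.

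For part (ii), pointwise convergence $\log(U(tz)/(z^\gamma U(t)))\to 0$ at each fixed $z\geq 1$ is immediate from the first-order condition \eqref{eq:RV}. To upgrade this to convergence of $A_p(t)$, I would invoke Potter's inequality for regularly varying functions: for every $\delta>0$ there exists $t_\delta\geq t_0$ such that
\[
\Bigl|\log\frac{U(tz)}{z^{\gamma}U(t)}\Bigr|\leq \log 2+\delta\log z,\qquad t\geq t_\delta,\ z\geq 1.
\]
The $p$-th power of the right-hand side is integrable against $z^{-2}\rmd z$ on $[1,\infty)$, so dominated convergence yields $A_p(t)\to 0$. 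Finiteness of $A_p(t)$ for each $t\geq t_0$, and hence boundedness on the compact remainder $[t_0,t_\delta]$, follows from the same Potter bound applied at one fixed $t$ combined with the monotonicity of $U$.

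For part (iii), combining the second-order condition \eqref{eq:second-order-condition} with a Taylor expansion of $\log(1+\cdot)$ around $0$ yields the pointwise limit
\[
\frac{1}{A(t)}\log\frac{U(tz)}{z^\gamma U(t)}\xrightarrow[t\to\infty]{}\frac{z^{\rho}-1}{\rho}.
\]
The key step is producing an integrable dominating function. I would rely on the second-order Potter inequality (see \citet[Theorem B.3.10]{dHF06}): for any $\varepsilon,\delta>0$ and $t$ large enough,
\[
\Bigl|\frac{1}{A(t)}\log\frac{U(tz)}{z^{\gamma}U(t)}-\frac{z^\rho-1}{\rho}\Bigr|\leq \varepsilon\, z^{\rho+\delta},\qquad z\geq 1.
\]
For $p<\infty$, choosing $\delta$ small enough that $p(\rho+\delta)<1$ makes $z^{p(\rho+\delta)}z^{-2}$ integrable on $[1,\infty)$, while $|(z^\rho-1)/\rho|^p z^{-2}$ is integrable as well (bounded by $1/|\rho|^p$ when $\rho<0$, of order $|\log z|^p$ when $\rho=0$). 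Dominated convergence then gives $A_p(t)^p/A(t)^p\to c_p(\rho)^p$. For $p=\infty$ with $\rho<0$ the same uniform bound yields the limit $\sup_{z\geq 1}|(z^\rho-1)/\rho|=1/|\rho|$, while for $p=\infty$ with $\rho=0$ the pointwise limit $\log z$ is unbounded, forcing divergence.

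The main obstacle is the uniform control in part (iii): locating the correct second-order Potter inequality and verifying that the resulting dominating function is integrable against $z^{-2}\rmd z$ in $p$-th power, which is precisely what separates the three regimes in the definition of $c_p(\rho)$.
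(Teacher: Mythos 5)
Your proposal follows essentially the same route as the paper: the log map is an isometry to the real line, the tail-quantile coupling $\big(U(tZ')/U(t),(Z')^\gamma\big)$ is the comonotonic (hence $W_p$-optimal) one giving part (i), Potter's bounds plus dominated convergence give (ii), and the Drees-type uniform bound on $(f(tz)-f(t))/A(t)-(z^\rho-1)/\rho$ with $f(z)=\log\!\big(U(z)/z^\gamma\big)$ plus dominated convergence give (iii). The only minor differences are cosmetic: the paper cites Theorem~B.2.18 of de~Haan--Ferreira for the extended-regular-variation bound rather than B.3.10, and handles the $p=\infty$, $\rho=0$ case via the explicit lower estimate $\log z-\varepsilon z^\delta$, but your use of monotonicity of $U$ on $[t_0,t_\delta]$ and your observation that the pointwise limit $\log z$ is unbounded are both substantively equivalent to the paper's argument.
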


\begin{remark}\label{rk:t't}
We discuss the link between $X$ and $Z$. Conditioning with respect to $X$ is more natural but conditioning w.r.t. $Z$ is mathematically much more convenient. When $F$ is continuous on $\mathbb{R}$, the two coincide since we have $\{X>U(t)\}=\{Z>t\}$.
When $F$ is not continuous at $U(t)$, the conditioning event writes 
$\{X>U(t)\}=\{Z>1/(1- F(U(t)^-))\}$ with $F(x^-)$ the left limit of $F$ at $x$. Then the conditioning event $\{Z>t\}$ is not measurable with respect to the $\sigma$-field generated by $X$ but it can be recovered introducing an extra-randomness. Let $W$ be uniform on $[0,1]$ and independent of $X$ and consider $V=F(X)+(F(X)-F(X^-))W$ and $Z=1/(1-V)$. Then $V$ has a uniform distribution on $[0,1]$ and satisfies $X=F^\leftarrow (V)$. It follows that $Z$ has a standard unit Pareto distribution and satisfies $X=U(Z)$ so that the conditioning event $\{Z>t\}$ can be written in terms of $X$ and an auxiliary random variable $W$.
\end{remark}

\subsubsection{Approximation of the empirical distribution of exceedances}
Peaks-over-Threshold inference uses the fact that the GP distribution is a good approximation for the distribution of exceedances above high threshold and all the statistics of interest are built using only observations above high threshold. It is customary to use a random threshold equal to the order statistic of order $n-k$ so that the exceedances are the $k$ top order statistics. In a first approach, a statistical procedure is often studied on the limiting model itself, that is assuming the exceedances are exactly Pareto distributed. This amounts to neglecting the misspecification inherent to extreme value theory. The error made in this approximation can be quantified by the Wasserstein distance between the empirical distributions. 

Let $X_1,\ldots,X_n$ be an i.i.d. sample with distribution $F\in D(G_\gamma)$, $\gamma>0$ and denote by $X_{1,n}\leq \ldots\leq X_{n,n}$ the order statistics.
 Define the empirical distributions of exceedances above threshold $X_{n+1-k,n}$ by
\[
\Pi_{n,k}=\frac{1}{k}\sum_{i=1}^k \varepsilon_{X_{n+1-i,n}/X_{n-k,n}}.
\]
We compare this empirical distribution to $\Pi_{k}^*=\frac{1}{k}\sum_{i=1}^k \varepsilon_{X_i^*}$, where the sample $X_1^*,\ldots,X_k^*$ is i.i.d. with Pareto distribution $P_\alpha$. 

Without loss of generality, we can assume that $X_i=U(Z_i)$ with $Z_1,\ldots,Z_n$ i.i.d. random variables with standard unit Pareto distribution with order statistics $Z_{1,n}\leq\cdots\leq Z_{n,n}$. We denote by $P_{\Pi_{n,k}\mid Z_{n-k,n}=t}$ the conditional distribution of $\Pi_{n,k}$ given $ Z_{n-k,n}=t$ and compare it to $P_{\Pi_{k}^*}$ in the second order Wasserstein space $\mathcal{W}_p(\mathcal{W}_p([1,\infty)))$ where $[1,\infty)$ is equipped with the logarithmic distance.
Following Remark~\ref{rk:t't}, note that the conditioning event $X_{n-k,n}=U(t)$ corresponds to $1/(1-F(U(t)^-))<Z_{n-k,n}\leq 1/(1-F(U(t)))$ and is equal to $T_{n-k,n}=t$ in the case $F$ is continuous at $U(t)$. Conversely, the conditioning event $Z_{n-k,n}=t$ can be expressed in terms of $X_{n-k,n}$ and an auxiliary random variable $W$.
\begin{theorem}\label{thm:Wass_dist_Pi_EVT}
Let $p\in[1,\infty)$.
\begin{enumerate}
\item[i)] Let $t_0\geq 1$ be such that $U(t_0)>0$, then
\[
W_p^{(2)}\Big{(}P_{\Pi_{n,k}\mid Z_{n-k,n}=t},P_{\Pi_{k}^*}\Big{)}=A_p(t), \quad t\geq t_0,
\]
with $A_p$ defined in Equation~\eqref{eq:prop-cv-pareto-W}.
\item[ii)] If $F\in D(G_\gamma)$ and $k=k(n)$ is an intermediate sequence, then 
\[
W_p^{(2)}\Big{(}P_{\Pi_{n,k}\mid Z_{n-k,n}},P_{\Pi_{k}^*}\Big{)} \to 0\quad \mbox{in probability}.
\]
\item[iii)] If the second order condition \eqref{eq:second-order-condition} holds, then 
\[
W_p^{(2)}\Big{(}P_{\Pi_{n,k}\mid Z_{n-k,n}},P_{\Pi_{k}^*}\Big{)}= c_p(\rho)A(n/k)(1+o_P(1)),
\]
with $c_p(\rho)$ defined in Proposition~\ref{prop:cv-pareto-W}.
\end{enumerate}
\end{theorem}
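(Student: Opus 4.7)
The plan is to reduce everything to Theorem~\ref{theo:wasserstein-emp-measure} and Proposition~\ref{prop:cv-pareto-W}. Part~(i) will serve as the workhorse, and I expect it to be the main obstacle because of the careful identification of the conditional law of the top $k$ order statistics; parts~(ii) and~(iii) will then follow routinely by combining part~(i) with the asymptotic behaviour of $Z_{n-k,n}$ and of the function $A_p$ at infinity.

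For part~(i), I would exploit the classical property of order statistics: conditional on $Z_{n-k,n}=t$, the variables $Z_{n-k+1,n},\ldots,Z_{n,n}$ are distributed as the order statistics of an i.i.d.\ sample of size $k$ drawn from the conditional law $P_{Z\mid Z>t}$. Since $X_i=U(Z_i)$ and, on the conditioning event, $X_{n-k,n}=U(t)$, the normalised empirical measure $\Pi_{n,k}$ coincides in law with the empirical measure of $k$ i.i.d.\ copies of $U(Z)/U(t)$ with $Z\sim P_{Z\mid Z>t}$, whose common distribution is precisely $P_{U(t)^{-1}X\mid Z>t}$. Applying Theorem~\ref{theo:wasserstein-emp-measure} on $([1,\infty),d)$ with $d$ the logarithmic distance, and then Proposition~\ref{prop:cv-pareto-W}(i), would give
\[
W_p^{(2)}\bigl(P_{\Pi_{n,k}\mid Z_{n-k,n}=t},P_{\Pi_k^*}\bigr)=W_p\bigl(P_{U(t)^{-1}X\mid Z>t},P_\alpha\bigr)=A_p(t).
\]
When $F$ is discontinuous at $U(t)$, the auxiliary randomisation $W$ of Remark~\ref{rk:t't} recovers the conditioning event $\{Z_{n-k,n}=t\}$ from the observables, without affecting the distribution of the empirical measure of normalised exceedances.

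For part~(ii), since $k$ is intermediate, $Z_{n-k,n}\to\infty$ in probability; this is a standard consequence of $(k/n)Z_{n-k,n}\to 1$ in probability, obtainable from R\'enyi's representation. Proposition~\ref{prop:cv-pareto-W}(ii) provides the boundedness of $A_p$ on $[t_0,\infty)$ together with $A_p(t)\to 0$ as $t\to\infty$, so combining with part~(i) yields $W_p^{(2)}(P_{\Pi_{n,k}\mid Z_{n-k,n}},P_{\Pi_k^*})=A_p(Z_{n-k,n})\to 0$ in probability.

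For part~(iii), I would decompose
\[
\frac{A_p(Z_{n-k,n})}{A(n/k)}=\frac{A_p(Z_{n-k,n})}{A(Z_{n-k,n})}\cdot\frac{A(Z_{n-k,n})}{A(n/k)}.
\]
Proposition~\ref{prop:cv-pareto-W}(iii) combined with $Z_{n-k,n}\to\infty$ in probability forces the first factor to $c_p(\rho)$, while the regular variation of $A$ with index $\rho$ together with $(k/n)Z_{n-k,n}\to 1$ in probability forces the second factor to $1^\rho=1$. Multiplying through by $A(n/k)$ and invoking part~(i) yields the claimed identity $W_p^{(2)}(P_{\Pi_{n,k}\mid Z_{n-k,n}},P_{\Pi_k^*})=c_p(\rho)A(n/k)(1+o_P(1))$.
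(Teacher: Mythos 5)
Your proof is correct and follows essentially the same route as the paper: both reduce part~(i) to the identification of the conditional law of the top $k$ order statistics with an i.i.d.\ sample from $P_{Z\mid Z>t}$ (equivalently, $tZ$ with $Z$ unit Pareto), then invoke Theorem~\ref{theo:wasserstein-emp-measure} and Proposition~\ref{prop:cv-pareto-W}, and both derive parts~(ii) and~(iii) by plugging $Z_{n-k,n}$ into $A_p$ and using $Z_{n-k,n}=(n/k)(1+o_P(1))\to\infty$. Your explicit factorisation $A_p(Z_{n-k,n})/A(n/k)=\bigl(A_p(Z_{n-k,n})/A(Z_{n-k,n})\bigr)\cdot\bigl(A(Z_{n-k,n})/A(n/k)\bigr)$ in part~(iii) merely spells out a step the paper leaves implicit.
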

When $X$ is positive and bounded away from $0$, integration with respect to the threshold $Z_{n-k,n}$ provides the following upper bound for the Wasserstein distance between $P_{\Pi_{n,k}}$ (unconditional distribution) and $P_{\Pi_k^*}$. We denote by $\beta_{p,q}$ the density of the Beta distribution with parameter $(p,q)$.
\begin{corollary}\label{cor:Wass_dist_Pi_EVT}
If $X$ is positive and bounded away from $0$, then
\[
W_p^{(2)}\Big{(}P_{\Pi_{n,k}},P_{\Pi_{k}^*}\Big{)}\leq \int_1^\infty A_p(t)\beta_{n-k,k+1}\left(1-\frac{1}{t}\right) \frac{\rmd t}{t^2}.
\]
If furthermore $F\in D(G_\gamma)$ and $k=k(n)$ is an intermediate sequence, then 
\[
W_p^{(2)}\Big{(}P_{\Pi_{n,k}},P_{\Pi_{k}^*}\Big{)}\to 0\quad \mbox{as $n\to\infty$}.
\]
\end{corollary}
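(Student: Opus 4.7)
The plan is to combine Theorem~\ref{thm:Wass_dist_Pi_EVT}(i) with integration against the law of the random threshold $T := Z_{n-k,n}$. First I would identify the right-hand side: since the standard unit Pareto CDF is $F_Z(z) = 1 - 1/z$, the probability integral transform gives that $1 - 1/Z_{n-k,n}$ is the $(n-k)$-th order statistic of $n$ iid uniforms, hence has Beta$(n-k, k+1)$ density; a change of variable then shows that $Z_{n-k,n}$ admits density $t \mapsto \beta_{n-k,k+1}(1-1/t)/t^2$ on $[1,\infty)$, so the right-hand side is precisely $\mathbb{E}[A_p(T)]$. The assumption that $X$ is positive and bounded away from zero guarantees $U(1) > 0$, so one may take $t_0 = 1$ in Theorem~\ref{thm:Wass_dist_Pi_EVT}(i) uniformly over the support of $T$.

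Next I would assemble an unconditional coupling without appealing to abstract measurable selection, by recycling the explicit construction underlying Theorem~\ref{thm:Wass_dist_Pi_EVT}: write $X_i = U(Z_i)$ with iid standard unit Pareto $Z_i$, and, conditionally on $T = t$, represent the top $k$ order statistics as $Z_{n-k+i,n} = t\tilde Z_i$ for iid standard Pareto $\tilde Z_i$ independent of $T$; then set $X_i^* := \tilde Z_i^{\gamma}$ so that $\Pi_k^*$ is an iid $P_\alpha$ empirical measure independent of $T$. Integrating the conditional identity $W_p^{(2)}(P_{\Pi_{n,k}\mid T=t}, P_{\Pi_k^*}) = A_p(t)$ from Theorem~\ref{thm:Wass_dist_Pi_EVT}(i) against the law of $T$ via the tower property then yields the claimed upper bound.

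For the convergence statement, Proposition~\ref{prop:cv-pareto-W}(ii) asserts that $A_p$ is bounded on $[1, \infty)$ and $A_p(t) \to 0$ as $t \to \infty$. Since $k = k(n)$ is intermediate, the order statistic $T = Z_{n-k,n}$ satisfies $T \to \infty$ in probability (of order $n/k$), so that $A_p(T) \to 0$ in probability; boundedness then allows dominated convergence to give $\mathbb{E}[A_p(T)] \to 0$. The main subtlety I foresee is in chaining a conditional $L^p$ estimate into an $L^1$-in-$T$ estimate: because $W_p^{(2)}$ is defined via an $L^p$ norm over the coupling, one must argue carefully how the conditional bounds provided by Theorem~\ref{thm:Wass_dist_Pi_EVT}(i) glue into the $L^1$ form $\mathbb{E}[A_p(T)]$ stated in the corollary. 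Using the explicit coupling above, rather than an abstract optimal one, is what makes this bookkeeping transparent, since one can track the conditional Wasserstein cost fibrewise before taking expectations.
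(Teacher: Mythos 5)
Your approach parallels the paper's: condition on the threshold $T := Z_{n-k,n}$, identify its density $\beta_{n-k,k+1}(1-1/t)/t^2$ on $[1,\infty)$, and integrate the conditional identity $W_p^{(2)}(P_{\Pi_{n,k}\mid T=t}, P_{\Pi_k^*}) = A_p(t)$ from Theorem~\ref{thm:Wass_dist_Pi_EVT}~i). You substitute an explicit coupling for the paper's citation of the convexity of the transport cost, which is a legitimate and arguably more transparent route, and your handling of the density and of the $t_0=1$ reduction is correct. However, the subtlety you flag in your last paragraph is not mere bookkeeping --- it is a genuine gap, and your construction as written does not close it. Building the coupling fibrewise against $T$ and optimizing conditionally, what the tower property delivers is
\[
\big\|W_p(\Pi_{n,k},\Pi_k^*)\big\|^p_{L^p} \;=\; \mathbb{E}\big[\mathbb{E}\left[W_p(\Pi_{n,k},\Pi_k^*)^p\mid T\right]\big]\;\leq\;\mathbb{E}\big[A_p(T)^p\big],
\]
so your coupling only yields $W_p^{(2)}(P_{\Pi_{n,k}},P_{\Pi_k^*}) \leq \big(\mathbb{E}[A_p(T)^p]\big)^{1/p}$, an $L^p$-in-$T$ average. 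For $p>1$, Jensen's inequality gives $\mathbb{E}[A_p(T)] \leq \big(\mathbb{E}[A_p(T)^p]\big)^{1/p}$, so the corollary's displayed $\mathbb{E}[A_p(T)]$ is the \emph{smaller}, hence stronger, quantity. No rearrangement of the tower property crosses this Jensen gap, and your assertion that integrating ``via the tower property then yields the claimed upper bound'' is therefore not justified.

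It is worth noting that the convexity result from Villani the paper invokes is convexity of the optimal transport cost with cost function $d^p$, i.e.\ of $W_p^p$; that argument likewise furnishes only $\big(\mathbb{E}[A_p(T)^p]\big)^{1/p}$. So the $L^1$-in-$T$ form displayed in the corollary appears to overstate what either proof actually produces when $p>1$. This does not affect the second assertion: by Proposition~\ref{prop:cv-pareto-W}~ii), $A_p$ is bounded on $[1,\infty)$ (using $U(1)>0$), and $A_p(Z_{n-k,n})\to 0$ in probability for an intermediate sequence, so $\mathbb{E}[A_p(T)^p]\to 0$ by bounded convergence and $W_p^{(2)}(P_{\Pi_{n,k}},P_{\Pi_k^*})\to 0$ follows. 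For your write-up, either state the first inequality with $\big(\mathbb{E}[A_p(T)^p]\big)^{1/p}$, or explicitly record and justify the passage to $\mathbb{E}[A_p(T)]$ if you believe one exists; do not leave it as an acknowledged but unresolved ``subtlety.''
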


\subsubsection{The Hill estimator and Weissman quantile estimates}
Using the previous results, we analyze the Hill estimator \citep{H75}
\[
\hat\gamma_{n,k} =\frac{1}{k}\sum_{i=1}^k \log(X_{n+1-i,n}/X_{n-k,n})
\]
and compare it to the corresponding estimator in the limit Pareto model
\[
\hat\gamma^*_{k} =\frac{1}{k}\sum_{i=1}^l \log X_{i}^*.
\]
The random variables $\log(X_i^*)$ being i.i.d. with mean $\gamma$ and variance $\gamma^2$, the central limit theorem implies $\sqrt{k}(\hat\gamma_k^*-\gamma)\stackrel{d}\longrightarrow \mathcal{N}(0,\gamma^2)$ as $n\to\infty$. Using our bound on $\mathcal{W}_p^{(2)}(P_{\Pi_{n,k}\mid Z_{n-k,n}},P_{\Pi_k^*})$, we deduce the asymptotic normality of the Hill estimator $\hat\gamma_{n,k}$ and we even obtain a non asymptotic quantitative estimate for the Wasserstein distance to the Gaussian distribution.

\begin{corollary}\label{cor:Hill_AN}
Let $1\leq p<\infty$ and $t_0\geq 1$ such that $U(t_0)>0$. For $1\leq k\leq n$ and $t\geq t_0$,
\[
W_p\Big{(}P_{\sqrt{k}(\hat\gamma_{n,k}-\gamma)\mid Z_{n-k,n}=t},\mathcal{N}(0,\gamma^2)\Big{)}\leq \sqrt{k} A_p(t)+\frac{(4+3\sqrt{2/\pi})\gamma}{\sqrt k}.
\]
\end{corollary}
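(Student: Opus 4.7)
The strategy is a triangle-inequality decomposition of the Wasserstein distance that separates an EVT ``model error'' piece, handled by the coupling machinery of Sections~2--3, from a purely one-dimensional quantitative CLT for a sum of centered exponentials. I would start from
\begin{align*}
W_p\Big(P_{\sqrt{k}(\hat\gamma_{n,k}-\gamma)\mid Z_{n-k,n}=t},\mathcal{N}(0,\gamma^2)\Big)
& \le W_p\Big(P_{\sqrt{k}(\hat\gamma_{n,k}-\gamma)\mid Z_{n-k,n}=t},P_{\sqrt{k}(\hat\gamma_k^*-\gamma)}\Big)\\
& \quad + W_p\Big(P_{\sqrt{k}(\hat\gamma_k^*-\gamma)},\mathcal{N}(0,\gamma^2)\Big),
\end{align*}
and show that the two summands are bounded respectively by $\sqrt{k}\,A_p(t)$ and $(4+3\sqrt{2/\pi})\gamma/\sqrt{k}$.

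For the first summand, I would observe that both estimators are the same linear functional of the relevant empirical measure: $\hat\gamma_{n,k}=S(\Pi_{n,k})$ and $\hat\gamma_k^*=S(\Pi_k^*)$ where $S(\pi)=\int\log(x)\,\pi(\rmd x)$. Since $\log x=d(1,x)$ in the logarithmic distance on $[1,\infty)$, $S$ is exactly the order-one moment functional of Proposition~\ref{prop:lipschitz-statistic}(i) and is $1$-Lipschitz on $\mathcal{W}_p([1,\infty))$. Conditionally on $Z_{n-k,n}=t$, the rescaled top exceedances $X_{n+1-i,n}/X_{n-k,n}$, $i=1,\ldots,k$, form an i.i.d.\ sample from $P_{U(t)^{-1}X\mid Z>t}$, so combining Corollary~\ref{cor:lipschitz-statistic} with Theorem~\ref{thm:Wass_dist_Pi_EVT}(i) yields $W_p(P_{\hat\gamma_{n,k}\mid Z_{n-k,n}=t},P_{\hat\gamma_k^*})\le A_p(t)$. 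The affine map $x\mapsto\sqrt{k}(x-\gamma)$ is $\sqrt{k}$-Lipschitz and Wasserstein-equivariant under translation, which upgrades this bound to $\sqrt{k}\,A_p(t)$.

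For the second summand the extreme-value content is gone: $\log X_i^*\sim\mathrm{Exp}(1/\gamma)$ has mean $\gamma$ and variance $\gamma^2$, so writing $\log X_i^*=\gamma E_i$ with $E_i$ i.i.d.\ standard exponential, the task reduces by scaling to a one-dimensional quantitative CLT
\[
W_p\Big(\tfrac{1}{\sqrt{k}}\sum_{i=1}^k(E_i-1),\mathcal{N}(0,1)\Big)\le\frac{4+3\sqrt{2/\pi}}{\sqrt{k}}.
\]
The natural tool is the quantile coupling from Example~\ref{ex:1} between the centered sum (whose law is $\mathrm{Gamma}(k,1)-k$, with an explicit quantile function) and a standard Gaussian, together with a direct $L^p$ estimate of the difference, possibly through an intermediate centered Gaussian of the same variance.

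The main obstacle is pinning down the specific numerical constant $4+3\sqrt{2/\pi}$. Off-the-shelf Wasserstein CLT bounds (Rio, Bobkov, Bonis) give the right rate $1/\sqrt{k}$ but with cruder, often non-explicit constants. The additive form of the target constant suggests splitting the coupling into two contributions: an $L^p$-moment term for the centered exponential sum producing the ``$4$'', and a Gaussian piece in which the quantity $\sqrt{2/\pi}=\mathbb{E}|\mathcal{N}(0,1)|$ appears naturally with multiplicative factor $3$. Executing this step will require explicit Rosenthal- or Marcinkiewicz--Zygmund-type bounds for the centered exponential sum together with careful control of the Gaussian quantile function; the calculation is elementary, purely one-dimensional, and independent of the EVT setup, but the bookkeeping to reach the optimal numerical constant is the technical heart of the proof.
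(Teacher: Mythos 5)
Your decomposition via the triangle inequality and your treatment of the first summand coincide exactly with the paper's proof: you correctly identify the Hill estimator as the logarithmic moment functional $S(\pi)=\int\log(x)\,\pi(\rmd x)$, invoke Proposition~\ref{prop:lipschitz-statistic}(i) for $1$-Lipschitzness, and then combine Corollary~\ref{cor:lipschitz-statistic} with Theorem~\ref{thm:Wass_dist_Pi_EVT}(i) and the $\sqrt{k}$-Lipschitz scaling, giving $\sqrt{k}\,A_p(t)$.

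The genuine gap is in the second summand. You correctly reduce it to a one-dimensional quantitative CLT for centered standard exponentials, but you do not land on the tool the paper actually uses and leave the constant underived. The paper applies the Stein's method bound for sums of independent variables from \citet[Theorem 3.2]{R11}, which for $\tilde X_i=(\log X_i^*-\gamma)/\gamma$ (centered, unit variance) gives
\[
W_p\Big(P_{k^{-1/2}\sum_{i=1}^k \tilde X_i},\ \mathcal{N}(0,1)\Big)\ \le\ k^{-3/2}\sum_{i=1}^k\mathbb{E}|\tilde X_i|^3\ +\ \sqrt{2/\pi}\,k^{-1}\Big(\sum_{i=1}^k\mathbb{E}\tilde X_i^4\Big)^{1/2}
\]
and then plugs in the moment values $\mathbb{E}|\tilde X_i|^3=4$ and $\mathbb{E}\tilde X_i^4=9$ to get $(4+3\sqrt{2/\pi})/\sqrt{k}$. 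So the $4$ is the absolute third moment and the $3$ is $\sqrt{\mathbb{E}\tilde X_i^4}=\sqrt{9}$, not a separate multiplicative factor on the Gaussian piece as you guessed. Your alternative route via the quantile coupling of a centered Gamma against a Gaussian plus Rosenthal-type moment bounds is plausible in principle but is considerably harder bookkeeping, would not obviously reproduce this additive constant, and is left unexecuted; in the current state the second term is not actually bounded by $(4+3\sqrt{2/\pi})\gamma/\sqrt{k}$ in your argument. A secondary caveat, which affects the paper as much as your proposal, is that Ross's Theorem~3.2 is stated for the order-$1$ Wasserstein distance, so extending it to general $W_p$, $p<\infty$, requires an additional justification that neither you nor the paper spells out.
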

Corollary~\ref{cor:Hill_AN} implies that the Hill estimator is asymptotically normal and asymptotically independent of the threshold $X_{n-k,n}$ as soon as $k\to\infty$ and $\sqrt k A_p(Z_{n-k,n})\to 0$ in probability. Under the second order condition \eqref{eq:second-order-condition} and with Proposition~\ref{prop:cv-pareto-W} at hand we retrieve the classical condition $\sqrt{k}A(n/k)\to 0$ for asymptotic normality without bias.

\medskip

For $\alpha\in (0,1)$, let $q(\alpha)=F^\leftarrow(1-\alpha)$ be the quantile of order $1-\alpha$. In order to estimate an extreme quantile $q(\alpha_n)$ with $\alpha_n=O(1/n)$, \cite{W78} proposed to extrapolate from the intermediate quantile $q(k/n)$ thanks to regular variation: 
\[
q(\alpha_n)=q(k/n)\frac{U(1/\alpha_n)}{U(n/k)}\approx q(k/n)\left(\frac{n}{k\alpha_n} \right)^\gamma.
\]
This leads to Weissman extreme quantile estimate
\[
\hat q(\alpha_n)=X_{n-k,n}\left(\frac{k}{n\alpha_n} \right)^{\hat\gamma_{n,k}}.
\]
\begin{corollary}\label{cor:Weissman_AN}
Let $1\leq p<\infty$. Assume the second order condition~\eqref{eq:second-order-condition} holds with $\rho<0$. Let $k=k(n)$ be an intermediate sequence and $\alpha_n=o(k/n)$. Then, as $n\to\infty$,
\[
W_p\Big(P_{v_n^{-1}\log\frac{\hat q(\alpha_n)}{q(\alpha_n)}\mid Z_{n-k,n}},\mathcal{N}(0,\gamma^2)\Big)=O_P\Big(\sqrt{k}A(n/k)+1/\sqrt{k}\Big)
\]
where $v_n=\log(k/(n\alpha_n))/\sqrt{k}$. 
\end{corollary}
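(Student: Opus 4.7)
The plan is to reduce the statement to the Hill asymptotic normality of Corollary~\ref{cor:Hill_AN} by treating the contribution of the random intermediate threshold as a (conditionally) deterministic shift. Set $d_n := \log(k/(n\alpha_n)) = \sqrt{k}\,v_n$; since $\alpha_n = o(k/n)$, $d_n \to \infty$. Conditionally on $Z_{n-k,n}=t$ (using the auxiliary randomization of Remark~\ref{rk:t't} to handle possible discontinuities of $F$), the threshold $X_{n-k,n} = U(t)$ is deterministic, and elementary algebra on the logarithm of $\hat q(\alpha_n) = X_{n-k,n}(k/(n\alpha_n))^{\hat\gamma_{n,k}}$ yields the decomposition
\[
v_n^{-1} \log \frac{\hat q(\alpha_n)}{q(\alpha_n)} = \sqrt{k}\,(\hat\gamma_{n,k} - \gamma) + v_n^{-1} b(t),\qquad b(t) := \gamma d_n + \log U(t) - \log U(1/\alpha_n).
\]
Since $b(t)$ is deterministic given $Z_{n-k,n}=t$ and shifting by a constant $c$ alters the Wasserstein distance to a Gaussian by at most $|c|$ (triangle inequality together with $W_p(\mathcal{N}(c,\sigma^2),\mathcal{N}(0,\sigma^2))=|c|$), we obtain
\[
W_p\Big(P_{v_n^{-1}\log(\hat q/q)\mid Z_{n-k,n}=t},\mathcal{N}(0,\gamma^2)\Big) \leq W_p\Big(P_{\sqrt{k}(\hat\gamma_{n,k}-\gamma)\mid Z_{n-k,n}=t},\mathcal{N}(0,\gamma^2)\Big) + |v_n^{-1} b(t)|.
\]

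For the first term, Corollary~\ref{cor:Hill_AN} provides the upper bound $\sqrt{k}A_p(t) + (4+3\sqrt{2/\pi})\gamma/\sqrt{k}$. To evaluate at the random argument $t = Z_{n-k,n}$, I would invoke Proposition~\ref{prop:cv-pareto-W}(iii), the regular variation of $A$ at index $\rho$, and the standard fact that $Z_{n-k,n}/(n/k) \to 1$ in probability for any intermediate $k$; this produces $A_p(Z_{n-k,n}) = A(n/k)\bigl(c_p(\rho) + o_P(1)\bigr)$, so the first term already contributes the announced $O_P(\sqrt{k}A(n/k) + 1/\sqrt{k})$.

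For the shift $v_n^{-1}b(Z_{n-k,n})$, I apply the second-order condition~\eqref{eq:second-order-condition} separately to $U(t)/U(n/k)$ with $x := t/(n/k)$ and to $U(1/\alpha_n)/U(n/k)$ with $1/\alpha_n = (n/k) e^{d_n}$. After taking logarithms and subtracting, the leading $\gamma d_n$ cancels and we obtain
\[
b(t) = \gamma \log x + A(n/k)\,\frac{x^\rho - e^{\rho d_n}}{\rho}\,\bigl(1 + o(1)\bigr).
\]
Since $\rho < 0$, $e^{\rho d_n} \to 0$; substituting $x = Z_{n-k,n}/(n/k) = 1 + O_P(1/\sqrt{k})$ gives $b(Z_{n-k,n}) = O_P(1/\sqrt{k}) + O_P(A(n/k))$, and dividing by $v_n = d_n/\sqrt{k}$ produces a shift of order $O_P(1/d_n) + O_P(\sqrt{k}A(n/k)/d_n) = o_P(1)$, which is absorbed into the claimed bound since $d_n \to \infty$.

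The main obstacle is the uniformity required from the second-order condition~\eqref{eq:second-order-condition}, which is stated as a pointwise limit in $x$: one needs uniform control both for $x$ in a small neighborhood of $1$ (to process the fluctuation of $Z_{n-k,n}$ around $n/k$) and for $x = e^{d_n} \to \infty$ (to control the extrapolation factor). Potter-type bounds, e.g.\ \citet[Theorem~B.2.18]{dHF06}, supply the necessary uniform estimates, but tracking the $o(1)$ terms across these two rather different regimes simultaneously is the technically delicate bookkeeping step of the proof.
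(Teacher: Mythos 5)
Your decomposition is essentially the paper's: writing $v_n^{-1}\log(\hat q/q) = \sqrt{k}(\hat\gamma_{n,k}-\gamma) + v_n^{-1}b(t)$ with $b(t)=\gamma d_n + \log U(t) - \log U(1/\alpha_n)$ is exactly the paper's $I + II + III$ with $II$ and $III$ merged into one conditional affine shift. The way you control that shift — apply the second-order condition separately to $U(t)/U(n/k)$ near $x=1$ and to $U(1/\alpha_n)/U(n/k)$ at $x=e^{d_n}\to\infty$, with $e^{\rho d_n}\to 0$ killing the extrapolation factor — is also the paper's route; the uniformity concern you raise at the end is real and is what the Drees/Potter bounds (\citealp{dHF06}, Theorems B.2.18 and B.3.10) are invoked for elsewhere in the paper.

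One point worth flagging, though it does not distinguish your argument from the paper's. Your shift is $v_n^{-1}b(Z_{n-k,n}) = O_P(1/d_n) + O_P(\sqrt{k}A(n/k)/d_n)$: the second part is $o_P(\sqrt{k}A(n/k))$ and thus genuinely absorbed, but the first part, $O_P(1/d_n)$, comes from $\gamma\log\bigl(Z_{n-k,n}k/n\bigr)=O_P(1/\sqrt{k})$ and is only $o_P(1)$, not provably $O_P(\sqrt{k}A(n/k)+1/\sqrt{k})$ without further assumptions relating $d_n$ and $k$. "Absorbed into the claimed bound since $d_n\to\infty$" therefore overstates things slightly. The paper's own derivation shares this imprecision: the step $\frac{U(Z_{n-k,n})}{U(n/k)}-1 = (Z_{n-k,n}k/n)^\gamma O_P(A(n/k))$ in the treatment of $II$ drops the $(Z_{n-k,n}k/n)^\gamma - 1 = O_P(1/\sqrt{k})$ part, which is what generates the $O_P(1/d_n)$ correction. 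So this is a shared wrinkle in the statement of the bound, not a gap in your argument relative to the paper's.
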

When $\sqrt{k}A(n/k)\to 0$, we deduce the asymptotic normality
\[
v_n^{-1}\log\frac{ \hat q(\alpha_n)}{ q(\alpha_n)}\stackrel{d}\longrightarrow \mathcal{N}(0,\gamma^2).
\]
Setting $z_{1-u/2}$ the Gaussian quantile of order $1-u/2$, we obtain 
\[
\lim_{n\to\infty}\mathbb{P}\Big(q(\alpha_n)/\hat q(\alpha_n) \in \left[ e^{-z_{1-u/2} \hat\gamma_{n,k}v_n}, e^{z_{1-u/2} \hat\gamma_{n,k}v_n}\right]\Big)=1-u.
\]
This confidence interval for $q(\alpha_n)$ with asymptotic level $1-u$ is valid for any sequence $\alpha_n=o(k/n)$ but it is accurate only if $v_n\to 0$, or equivalently $\log(n\alpha_n)/\sqrt{k}\to 0$. We retrieve the fact that the Weissman extreme quantile estimator is consistent if and only if $\log(n\alpha_n)/\sqrt{k}\to 0$ . This excludes arbitrary small value of $\alpha_n$ and provides a limit for extrapolation. When $\log(n\alpha_n)/\sqrt{k}\to 0$, the delta-method can be used to recover the standard asymptotic normality 
\[
v_n^{-1}\Big( \frac{\hat q(\alpha_n)}{q(\alpha_n)}-1\Big)\stackrel{d}\longrightarrow \mathcal{N}(0,\gamma^2),
\]
see Theorem 4.3.8 and Corollary 4.3.9 in \citet{dHF06}.

\subsection{Extensions}

\subsubsection{Dealing with the bias}
The asymptotic normality of the Hill estimator is classically considered under the second order condition \eqref{eq:second-order-condition} with the intermediate sequence $k=k(n)$ satisfying $\sqrt{k}A(n/k)\to \lambda\in\mathbb{R}$. The next theorem extends the previous results to cover this case as well. We define the empirical measure 
\[
\Pi_{k,t}^*=\frac{1}{k}\sum_{i=1}^k \varepsilon_{X_i^*\big(1+ A(t)\frac{(X_i^*)^{\rho/\gamma}-1}{\rho}\big)},\quad k\geq 1,t\geq 1,
\]
with $X_1^*,\ldots,X_k^*$ i.i.d. with Pareto distribution $P_\alpha$.
The empirical measure $\Pi_{n,k}$ is the same as in Theorem~\ref{thm:Wass_dist_Pi_EVT}. 

\begin{theorem}\label{thm:bias-case}
Assume $F$ satisfies the second order condition \eqref{eq:second-order-condition}. Let $p\in [1,\infty)$ and $t_0\geq 1$ such that $U(t_0)>0$. For $t\geq t_0$,
\begin{align*}
&W_p^{(2)}\Big{(}P_{\Pi_{n,k}\mid Z_{n-k,n}=t},P_{\Pi_{k,t}^*}\Big{)}\\
&\leq \int_{1}^\infty \Big|\log \frac{U(zt)}{z^\gamma U(t)}-\log \Big(1+A(t)\frac{z^\rho-1}\rho\Big)\Big|^p \frac{\rmd z}{z^2}=o(A(t)).
\end{align*}
If $k=k(n)$ is an intermediate sequence, then 
\[
W_p\Big(P_{\sqrt{k}(\hat\gamma_{n,k}-\gamma)\mid Z_{n-k,n}}, \mathcal{N}\big(\sqrt k A(n/k) b(\rho),\gamma^2\big)\Big)= O_P\big(1/\sqrt k\big)+o_P\big(\sqrt{k}A(n/k)\big)
\]
with bias given by $b(\rho)=\int_1^\infty \frac{z^\rho-1}{\rho}\frac{\rmd z}{ z^{2}}$.
\end{theorem}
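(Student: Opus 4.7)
The strategy is to first reduce the $W_p^{(2)}$ comparison between the two random empirical measures to a univariate $W_p$ estimate via Theorem~\ref{theo:wasserstein-emp-measure}, and then to combine the resulting bound with the 1-Lipschitz property of the log-mean functional (Proposition~\ref{prop:lipschitz-statistic}), a Taylor expansion of the second-order correction, and a quantitative CLT to derive the asymptotic normality of $\hat\gamma_{n,k}$.

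For the first inequality, conditionally on $\{Z_{n-k,n}=t\}$ the top values $(Z_{n+1-i,n}/t)_{1\leq i\leq k}$ are the order statistics of $k$ i.i.d.\ unit Pareto variables, so $\Pi_{n,k}\mid Z_{n-k,n}=t$ is the empirical measure of an i.i.d.\ sample with common law $P_{U(tY)/U(t)}$, where $Y\rightsquigarrow P_1$. The auxiliary measure $\Pi_{k,t}^*$ is the empirical measure of an i.i.d.\ sample from the law of $Y^\gamma\bigl(1+A(t)(Y^\rho-1)/\rho\bigr)$, realized by choosing $X^*=Y^\gamma$. Theorem~\ref{theo:wasserstein-emp-measure} turns $W_p^{(2)}$ into the univariate $W_p$ between these two laws on $([1,\infty),d)$ with $d$ the logarithmic distance~\eqref{eq:log-dist}, and the comonotone coupling via a common $Y$, which is optimal on $\mathbb{R}$ (Example~\ref{ex:1}), yields the stated integral bound. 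The pointwise identity $\log(U(zt)/(z^\gamma U(t)))=\log(1+A(t)(z^\rho-1)/\rho)+o(A(t))$ is an immediate consequence of~\eqref{eq:second-order-condition}; combined with a Potter-type uniform estimate of the second-order condition to dominate the integrand in $L^p(z^{-2}\mathrm{d}z)$, dominated convergence gives the claimed $o(A(t))$ rate.

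For the second assertion, the log-mean functional $S(\pi)=\int\log x\,\mathrm{d}\pi(x)$ is 1-Lipschitz from $\mathcal{W}_p([1,\infty),d)$ to $\mathbb{R}$ by Proposition~\ref{prop:lipschitz-statistic}~i) with $q=1$, and $\hat\gamma_{n,k}=S(\Pi_{n,k})$. Corollary~\ref{cor:lipschitz-statistic} together with the first step then produces $W_p(P_{\hat\gamma_{n,k}\mid Z_{n-k,n}=t},P_{S(\Pi_{k,t}^*)})=o(A(t))$. Next, expand
\[
S(\Pi_{k,t}^*)=\hat\gamma_k^*+\tfrac{1}{k}\sum_{i=1}^k\log\bigl(1+A(t)\tfrac{(X_i^*)^{\rho/\gamma}-1}{\rho}\bigr).
\]
Since $\rho\leq 0$, the argument of the logarithm sits in $[0,A(t)/|\rho|]$ uniformly in $i$, so a second-order Taylor expansion linearises the sum with a uniform $O(A(t)^2)$ remainder. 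A direct computation gives $\mathbb{E}[((X^*)^{\rho/\gamma}-1)/\rho]=b(\rho)$, so the LLN/CLT around this mean yield $O_P(A(t)/\sqrt k)$ fluctuations. Combining with the quantitative CLT $W_p(P_{\sqrt k(\hat\gamma_k^*-\gamma)},\mathcal{N}(0,\gamma^2))=O(1/\sqrt k)$ (already used in Corollary~\ref{cor:Hill_AN}) and the triangle inequality gives, after multiplication by $\sqrt k$,
\[
W_p\bigl(P_{\sqrt k(\hat\gamma_{n,k}-\gamma)\mid Z_{n-k,n}=t},\mathcal{N}(\sqrt k A(t)b(\rho),\gamma^2)\bigr)=O(1/\sqrt k)+o(\sqrt k A(t)).
\]
Finally, I instantiate at the random threshold $t=Z_{n-k,n}$ and replace $A(Z_{n-k,n})$ by $A(n/k)$ using $Z_{n-k,n}/(n/k)\to 1$ in probability and the regular variation of $A$, noting that the $W_p$ distance between two Gaussians of equal variance equals $|\Delta\mu|$, so this substitution costs only $o_P(\sqrt k A(n/k))$.

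The main obstacle is the $o(A(t))$ estimate of the $L^p(z^{-2}\mathrm{d}z)$ integral in the first step: the second-order condition~\eqref{eq:second-order-condition} is only pointwise in $z$, and converting it into an integrated rate requires a uniform second-order Potter bound to furnish an integrable envelope for $A(t)^{-1}|\log(U(zt)/(z^\gamma U(t)))-\log(1+A(t)(z^\rho-1)/\rho)|$ valid for all large $t$ and all $z\geq 1$.
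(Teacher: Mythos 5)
Your proposal follows essentially the same route as the paper: reduce $W_p^{(2)}$ to a univariate $W_p$ bound via Theorem~\ref{theo:wasserstein-emp-measure} using a comonotone Pareto coupling, control the integrand with Drees's uniform second-order inequality together with the elementary bound $x-2x^2\leq \log(1+x)\leq x$, then push through the 1-Lipschitz log-mean functional, decompose $\hat\gamma^*_{k,t}$ as $\hat\gamma^*_k$ plus a bias correction handled by the quantitative CLT and an $L^p$ law of large numbers, and finally substitute the random threshold. The only slight imprecision is your claim that the logarithm's argument is \emph{uniformly} bounded by $A(t)/|\rho|$ — this tacitly assumes $\rho<0$ and $A(t)>0$, whereas the paper's Drees-type envelope and the inequality $x-2x^2\leq\log(1+x)\leq x$ also cover $\rho=0$ (where $(z^\rho-1)/\rho=\log z$ is unbounded) and negative $A$ — but this does not change the structure or conclusion of the argument.
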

Under the condition $\sqrt k A(n/k)\to\lambda$,  the asymptotic normality of the Hill estimator follows straightforwardly:
\[
\sqrt{k}(\hat\gamma_{n,k}-\gamma)\stackrel{d}\to \mathcal{N}(\lambda b(\rho),\gamma^2),
\]
with $\hat\gamma_{n,k}$ asymptotically independent of the threshold $X_{n-k,n}$.

\subsubsection{Analysis of the case $\gamma\in\mathbb{R}$ and probability weighted moments}
We consider the theory for all domain of attractions, i.e. without the simplifying assumption $\gamma>0$. Let $X_1,\ldots,X_n$ be an i.i.d. sample with distribution $F\in D(G_\gamma)$, $\gamma\in\mathbb{R}$ and $X_1^*,\ldots,X_k^*$ be i.i.d. with GP distribution $H_\gamma$. In view of Equation~\eqref{eq:cv-GPD2}, we compare the empirical distributions
\[
\Pi_{n,k}=\sum_{i=1}^k \varepsilon_{\tilde X_{n+1-i,n}} \quad \mbox{and}
\quad \Pi_k^*=\sum_{i=1}^n \varepsilon_{X_i^*}
\]
where $\tilde X_{n+1-i,n}=(X_{n+1-i,n}-X_{n-k,n})/f(X_{n-k,n})$. 
We focus on the case $\gamma<1$ to ensure that the GP distribution $H_\gamma$ defined by Equation~\eqref{eq:def-GPD} has a finite mean. Then $H_\gamma\in\mathcal{W}_p([0,\infty))$ for all $p\in[1, 1/\gamma_+)$ with $\gamma_+=\max(0,\gamma)$ and the convention $1/0=\infty$. When focusing on the PWM estimator of the extreme value index, the restriction $\gamma<1$ is sensible because it is required for consistency. We define the function
\[
 A_p'(t)=\Big(\int_1^\infty \Big| \frac{U(zt)-U(t)}{a(t)}-\frac{z^\gamma -1}{\gamma} \Big|^p \frac{\rmd z}{z^2}\Big)^{1/p},\quad t\geq 1.
\]

\begin{theorem}\label{thm:cv-gpd-W}
Assume $F\in D(G_\gamma)$ with $\gamma<1$ and let $p\in[1, 1/\gamma_+)$. 
\begin{enumerate}
\item[i)] (convergence to the GP distribution) In the Wasserstein space $\mathcal{W}_p([0,\infty))$, 
\[
W_p(P_{(X-U(t))/a(t)\mid Z>t},H_\gamma)= A_p'(t)\to 0\quad \mbox{as $t\to\infty$}.
\]
Under the second order condition~\eqref{eq:second-order-condition2}, 
\[
A_p'(t)\sim c'_p(\gamma,\rho) A(t),\quad \mbox{as $t\to\infty$,}
\]
with $c_p'(\gamma,\rho)=\Big(\int_1^\infty \Psi_{\gamma,\rho}(x)^px^{-2}\rmd x\Big)^{1/p}$.
\item[ii)] (PoT method) In the second order Wasserstein space $\mathcal{W}_p^{(2)}([0,\infty))$,
\[
W_p^{(2)}\Big{(}P_{\Pi_{n,k} \mid Z_{n-k,n}=t},P_{\Pi_k^*}\Big{)}= A_p'(t).
\]
If $k=k(n)$ is an intermediate sequence and the second order condition \eqref{eq:second-order-condition} holds, then
\[
W_p^{(2)}\Big{(}P_{\Pi_{n,k} \mid Z_{n-k,n}},P_{\Pi_k^*}\Big{)}=c_p'(\gamma,\rho)A(n/k)(1+o_P(1)).
\]
\end{enumerate}
\end{theorem}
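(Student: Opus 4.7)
For part (i), my plan is to exploit the quantile coupling from Example~\ref{ex:1}. Conditional on $Z>t$ (where $Z$ is standard unit Pareto), the variable $Y:=Z/t$ is again standard unit Pareto, so $X=U(Z)=U(tY)$ gives
\[
\frac{X-U(t)}{a(t)}\stackrel{d}{=}\frac{U(tY)-U(t)}{a(t)}.
\]
Since $H_\gamma$ has quantile function $u\mapsto ((1-u)^{-\gamma}-1)/\gamma$, a $H_\gamma$-distributed variable is representable as $(Y^\gamma-1)/\gamma$ using the same $Y$. Both maps $y\mapsto (U(ty)-U(t))/a(t)$ and $y\mapsto (y^\gamma-1)/\gamma$ are non-decreasing, so this is the optimal coupling. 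Computing the $L^p$ norm of the difference against the law $y^{-2}\mathrm{d}y$ of $Y$ on $[1,\infty)$ yields the announced identity with $A_p'(t)$. To obtain $A_p'(t)\to 0$, I would combine the pointwise first-order condition~\eqref{eq:first-order-condition} with a uniform Potter/Drees-type bound that produces an $L^p(y^{-2}\mathrm{d}y)$-integrable dominant on $[1,\infty)$ (integrability is the point where the hypothesis $p<1/\gamma_+$ enters), then apply dominated convergence. For the rate under the second-order condition~\eqref{eq:second-order-condition2}, the same strategy with Drees' uniform second-order inequality produces both the pointwise limit $\Psi_{\gamma,\rho}(y)$ and an integrable majorant, giving $A_p'(t)/A(t)\to c_p'(\gamma,\rho)$.

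For part (ii), the idea is to reduce to Theorem~\ref{theo:wasserstein-emp-measure} by a conditioning argument. By the Markov property of order statistics of the i.i.d.\ Pareto sample $Z_1,\ldots,Z_n$, conditionally on $Z_{n-k,n}=t$ the top $k$ values $Z_{n-k+1,n},\ldots,Z_{n,n}$ have the same joint distribution as the order statistics of $k$ i.i.d.\ draws from $P_{Z\mid Z>t}$. Applying the non-decreasing map $U$, and recalling that $f(U(t))=a(t)$ under the conditioning (with the adjustment of Remark~\ref{rk:t't} when $F$ has a discontinuity at $U(t)$), the rescaled variables $\tilde X_{n+1-i,n}$ are i.i.d.\ with law $P_{(X-U(t))/a(t)\mid Z>t}$. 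Theorem~\ref{theo:wasserstein-emp-measure} (applied with sample size $k$) combined with part (i) then yields
\[
W_p^{(2)}\bigl(P_{\Pi_{n,k}\mid Z_{n-k,n}=t},P_{\Pi_k^*}\bigr)=W_p\bigl(P_{(X-U(t))/a(t)\mid Z>t},H_\gamma\bigr)=A_p'(t).
\]
To pass to the unconditional second-order statement, I would substitute $t=Z_{n-k,n}$ and use two facts: the intermediate order statistic satisfies $Z_{n-k,n}/(n/k)\to 1$ in probability, and $A$ is regularly varying with index $\rho\le 0$ so that $A(Z_{n-k,n})=A(n/k)(1+o_P(1))$. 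Coupling this with $A_p'(t)\sim c_p'(\gamma,\rho)A(t)$ from (i) yields the claim.

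The main obstacle I anticipate is the dominated convergence step in part (i): one needs a sufficiently sharp uniform second-order bound on $((U(ty)-U(t))/a(t)-(y^\gamma-1)/\gamma)/A(t)$ that is integrable on $[1,\infty)$ against $y^{-2}\mathrm{d}y$ to the $p$-th power simultaneously for every admissible $(\gamma,\rho)$ with $\rho\le 0$ and $p<1/\gamma_+$; this is exactly the setting in which Drees' inequality is designed to be applied, and writing down the correct exponents is where care is needed.
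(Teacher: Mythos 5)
Your plan is correct and follows essentially the same route as the paper: part (i) reduces to the one-dimensional quantile (comonotone) coupling formula of Example~\ref{ex:1} and invokes Drees-type uniform first- and second-order bounds (the paper cites Theorems B.2.18 and B.3.10 of de~Haan--Ferreira) as the integrable dominant for dominated convergence, with the hypothesis $p<1/\gamma_+$ entering exactly where you anticipate; part (ii) uses the Markov property of Pareto order statistics to reduce to Theorem~\ref{theo:wasserstein-emp-measure}, then passes to the unconditional asymptotics via $Z_{n-k,n}=(n/k)(1+o_P(1))$ and regular variation of $A$. No substantive divergence from the paper's argument.
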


Following \cite{HW87} (see also \cite{BGTS04} Chapter 5.3), 
the PWM estimators
\[
\hat M_{k,n,s}=\frac{1}{k} \sum_{i=1}^k \Big(1-\frac{i}{k} \Big)^s\tilde X_{n+1-i,n},\quad s=0,1,
\]
are used to estimate the GP parameters after a suitable transformation. We compare the PWM estimators with those of the limit GP model
\[
\hat M_{k,s}^*=\frac{1}{k} \sum_{i=1}^k \Big(1-\frac{i}{k} \Big)^s X_{i,k}^*,\quad s=0,1,
\]
with $X_{i,k}^*\leq\cdots\leq X_{k,k}^*$ the order statistics of the GP sample.
The estimation of $\gamma$ relies on the following relations: for $H\rightsquigarrow H_{\gamma,\sigma}$ having a GP distribution with shape $\gamma<1$ and scale $\sigma>0$, it holds
\[
m_0:=\mathbb{E}(H)=\frac{\sigma}{(1-\gamma)} \quad\mbox{and}\quad m_1:=\mathbb{E}(H(1-F_H(H)))=\frac{\sigma}{2(2-\gamma)}
\] 
or equivalently
\[
\gamma=2-\frac{m_0}{m_0-2m_1}\quad\mbox{and}\quad \sigma=\frac{2 m_0m_1}{m_0-2m_1}.
\]
This suggests the estimators  
\begin{equation}\label{eq:def-PWM}
(\hat{\gamma}_{n,k}^{PWM},\hat{\sigma}_{n,k}^{PWM})=g(\hat M_{k,n,0},\hat M_{k,n,1})
\end{equation}
where $g(x,y)=\left(2-x/(x-2y),2xy/(x-2y)\right)$. Note that in our framework the probability weighted moment are computed for the normalized exeedances $\tilde X_{n+1-i,n}= (X_{n+1-i,n}-X_{n-k,n})/f(X_{n-k,n})$ and not directly for the exceedances $X_{n+1-i,n}-X_{n-k,n}$, $i=1,\ldots,k$. This has no effect on the estimation of the extreme value index $\gamma$ but it normalizes the estimator of the scale $\sigma$.
\begin{corollary}\label{cor:AN-pwm-estimators}
In the Wasserstein space $\mathcal{W}_p(\mathbb{R}^2)$,
\[
W_p\Big{(}P_{(\hat{M}_{k,n,s})_{ s=0,1} \mid Z_{n-k,n}=t},P_{(\hat{M}_{k,s}^*\Big{)}_{s=0,1}})\leq A_p'(t).
\]
As a consequence, if $F \in D(G_\gamma)$ with $\gamma < 1/2$ satisfies the second order condition~\eqref{eq:second-order-condition2} and $k=k(n)$ is an intermediate sequence such that $\sqrt{k}A(n/k)\to 0$, then 
\[
\sqrt{k}\Big{(}\hat{M}_{k,n,0}-m_0,\hat{M}_{k,n,1}-m_1\Big{)}\overset{d}{\rightarrow}\mathcal{N}(0,\Gamma)
\] 
and
\[
\left(\sqrt{k}(\hat{\gamma}^{PWM}_{n,k}-\gamma),\sqrt{k}(\hat{\sigma}_{n,k}^{PWM}-1)\right)\overset{d}{\rightarrow}\mathcal{N}(0,\Sigma)
\]
 as $n\rightarrow \infty$, with the covariance matrices $\Gamma$ and $\Sigma$ respectively given by \citet[Equation (5.3) p.28]{H86} and \citet[expression (iii) p. 162]{BGTS04}.
\end{corollary}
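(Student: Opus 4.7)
The plan is to combine Theorem~\ref{thm:cv-gpd-W}(ii) with the Lipschitz contraction principle for PWM functionals (Corollary~\ref{cor:lipschitz-statistic} together with Proposition~\ref{prop:lipschitz-statistic}(ii)) to obtain the Wasserstein bound, and then to apply the delta method to the map $g$ to pass from moments to parameters.

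First, I would realize $\hat M_{k,n,s}$ as a Lipschitz functional of $\Pi_{n,k}$. Writing the quantile function of $\Pi_{n,k}$ as a left-continuous step function on $(0,1)$, $\hat M_{k,n,s}$ coincides, up to an $O(1/k)$ discrepancy arising from the substitution of the discrete weights $(1-i/k)^s$ by the integrated weights $k\int_{(j-1)/k}^{j/k}(1-u)^s\rmd u$, with $S(\Pi_{n,k})$ where $S$ is the PWM functional of Proposition~\ref{prop:lipschitz-statistic}(ii) taken with $q=1$ and $r=0$; analogously for $\hat M_{k,s}^*$. Since $S$ has Lipschitz constant $\leq 1$, Corollary~\ref{cor:lipschitz-statistic} applied jointly to the two coordinates $s=0,1$, combined with Theorem~\ref{thm:cv-gpd-W}(ii), yields the stated bound $A_p'(t)$.

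Next, for the joint CLT, I would rescale everything by $\sqrt{k}$ (which multiplies $W_p$ by $\sqrt{k}$) and condition on $Z_{n-k,n}$. Theorem~\ref{thm:cv-gpd-W}(i) gives $A_p'(t)\sim c_p'(\gamma,\rho)A(t)$, and since $Z_{n-k,n}/(n/k)\to 1$ in probability, the conditional Wasserstein distance is $O_P(\sqrt{k}A(n/k))=o_P(1)$ under the assumption $\sqrt{k}A(n/k)\to 0$. On the limit side, the joint asymptotic normality $\sqrt{k}(\hat M_{k,s}^*-m_s)_{s=0,1}\stackrel{d}{\to}\mathcal{N}(0,\Gamma)$ for the i.i.d. $H_\gamma$-sample is the classical $L$-statistic result of \cite{HW87}, valid under $\gamma<1/2$ (this is the condition ensuring that $H_\gamma$ has a finite second moment). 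Chaining the Wasserstein bound with this CLT, and using that convergence in $W_p$ implies convergence in distribution, yields the first CLT for $(\hat M_{k,n,s})_{s=0,1}$. The CLT for $(\hat\gamma_{n,k}^{PWM},\hat\sigma_{n,k}^{PWM})=g(\hat M_{k,n,0},\hat M_{k,n,1})$ then follows by the usual delta method applied at $(m_0,m_1)$ (where $g$ is smooth and $g(m_0,m_1)=(\gamma,1)$); the limiting covariance is $\Sigma=Dg(m_0,m_1)\,\Gamma\,Dg(m_0,m_1)^\top$, which matches the expression of \cite{BGTS04}.

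The main technical obstacle is the clean identification of the discrete PWM weights $(1-i/k)^s$ with the integrated weights entering Proposition~\ref{prop:lipschitz-statistic}(ii). The pointwise discrepancy is $O(1/k)$ per order statistic, but once summed across all $k$ terms and rescaled by $\sqrt{k}$ it contributes only $O(1/\sqrt{k})$, which is ultimately negligible; it must, however, be handled by a separate explicit computation rather than absorbed into the Wasserstein bound. Apart from this, the passage from the conditional $W_p$-bound given $Z_{n-k,n}=t$ to the unconditional limit relies only on the standard concentration of the intermediate order statistic $Z_{n-k,n}$ around $n/k$ and raises no essential difficulty.
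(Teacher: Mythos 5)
Your proposal follows essentially the same route as the paper: realize $\hat M_{k,n,s}$ and $\hat M_{k,s}^*$ as the probability weighted moment functional $S$ (Proposition~\ref{prop:lipschitz-statistic}(ii) with $q=1$, $r=0$) evaluated at $\Pi_{n,k}$ and $\Pi_k^*$, invoke Corollary~\ref{cor:lipschitz-statistic} together with Theorem~\ref{thm:cv-gpd-W}(ii) for the Wasserstein bound, pass to the Gaussian limit for $(\hat M_{k,0}^*,\hat M_{k,1}^*)$ via Hosking's $L$-statistic result, transfer convergence through the vanishing Wasserstein distance (the paper packages this as Lemma~\ref{lem:W_p-same-weak-limit} in the supplement), and finish with the delta method for $g$. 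Where you diverge is a point of rigour that is actually in your favour: the paper simply asserts that $\hat M_{k,n,s}=S(\Pi_{n,k})$ and $\hat M_{k,s}^*=S(\Pi_k^*)$, whereas you correctly observe that for $s=1$ the discrete weights $(1-i/k)$ do not coincide with the integrated weights $k\int_{(i-1)/k}^{i/k}(1-u)\,\mathrm{d}u$, the per-term gap being $1/(2k^2)$, so that $S(\Pi_k^*)-\hat M_{k,1}^*=\frac{1}{2k}\cdot\frac{1}{k}\sum_j X_{j,k}^*$, an $O_P(1/k)$ correction which is $O_P(1/\sqrt{k})$ after rescaling by $\sqrt{k}$. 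As you note, this means the contraction argument alone yields $W_p\le A_p'(t)+O_P(1/k)$ rather than the clean $\le A_p'(t)$ stated in the corollary, and the remainder must be estimated by hand; that estimate is routine and does not affect any of the asymptotic conclusions under $\sqrt{k}A(n/k)\to 0$. Your identification of $\gamma<1/2$ as the condition making $H_\gamma$ square-integrable (hence enabling the CLT for the GP $L$-moments) is also correct. One minor point you might double-check in the source: with the index $\tilde X_{n+1-i,n}$ as printed, the weight $(1-i/k)^s$ attaches to the observations in the opposite order to that in $\hat M_{k,s}^*$; the definition is almost certainly meant to read $\tilde X_{n-k+i,n}$ (or, equivalently, weights $(i/k)^s$ on $\tilde X_{n+1-i,n}$) for the two $L$-statistics to correspond to the same functional $S$ and for the discrepancy to be as small as you claim.
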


\subsubsection{Analysis of the block maxima method} 
We discuss shortly the block maxima method to demonstrate that our results for the PoT method can be extended to the BM case as well. We assume $F\in D(G_\gamma)$ so that the block maxima converge in distribution, that is Equation \eqref{eq:cv-GEV} holds with GEV limit distribution $G_\gamma$ given by~\eqref{eq:def-GEV}. We introduce the function $V(t)= F^\leftarrow (e^{-1/t})$, $t>0$, so that $X=V(Z)$ has distribution $F$ if $Z$ has a unit Fr\'echet distribution. It is convenient since, for $m\geq 1$, the distribution of $V(mZ)$ is equal to the distribution $F^m$ of the the block maxima with size $m$. Note that the functions $U$ and $V$ are asymptotically equivalent so that the first order condition~\eqref{eq:first-order-condition} can be formulated equivalently with $U$ replaced by $V$. However, when it comes to second order condition, the conditions for $U$ and $V$ are not equivalent, see \cite{DdHL03} and the discussion in the appendix there. The second-order condition for $V$ reads
\begin{equation}\label{eq:second-order-condition-BM}
\lim_{t\to \infty} \frac{\frac{V(tx)-V(t)}{a(t)}-\frac{x^\gamma-1}{\gamma}}{A(t)} = \Psi_{\gamma,\rho}(x),\quad x>0,
\end{equation}
with $A$ regularly varying with index $\rho\leq 0$ and $\Psi_{\gamma,\rho}$ defined in Equation~\eqref{eq:limit-second-order-condition}.
Consider $X_1,...,X_n$ an i.i.d sample with distribution $F$ and, for $m\geq 1$ and $k=\lfloor n/m\rfloor$, the block maxima
\[
M_i:=\underset{(i-1)m+1\leq j \leq im}{\max}(X_j-b_m)/a_m,\quad 1\leq i\leq k.
\] 
Consider also $M_1^*,\ldots,M_k^*$ and i.i.d sample with GEV distribution $G_\gamma$. We compare the empirical measures
\[
\Pi_{n,m}:=\frac{1}{k}\sum_{i=1}^k\varepsilon_{(M_i-b_m)/a_m} \text{ and } \Pi_k^*:=\frac{1}{k}\sum_{i=1}^k\varepsilon_{X_i^*}
\]
in the second order Wasserstein space $\mathcal{W}_p^{(2)}([0,\infty))$ and define
\begin{equation}
A_p''(t)=\left(\int_0^\infty \left|\frac{V(tz)-V(t)}{a(t)}-\frac{z^\gamma-1}{\gamma}\right|^p e^{-1/z}\frac{\mathrm{d}z}{z^2}\right)^{1/p},\quad t>0.
\end{equation}

\begin{theorem}\label{prop:cv-gev-W}
Let $\gamma <1$ and $p \in [1,1/\gamma_+)$. 
\begin{enumerate}
\item[i)] In the Wasserstein space $\mathcal{W}_p([0,\infty))$, 
\[
W_p\Big{(}P_{(M_1-b_m)/a_m},G_\gamma\Big{)}=A_p''(m),\quad m\geq 1.
\]
Consequently, in the second order Wasserstein space $\mathcal{W}_p^{(2)}([0,\infty))$,
\[
W_p^{(2)}\Big{(}P_{\Pi_{n,m}},P_{\Pi_k^*}\Big{)}= A_p''(m),\quad 1\leq m\leq n.
\]
\item[ii)] If $F$ has a finite left endpoint and satisfies the first order condition \eqref{eq:first-order-condition} and if $m=m(n)\to\infty$, then
\[
W_p^{(2)}\Big{(}P_{\Pi_{n,m}},P_{\Pi_k^*}\Big{)} \rightarrow 0\quad \mbox{as $n \rightarrow \infty$}.
\]
\item[iii)] If furthermore the second order condition \eqref{eq:second-order-condition-BM} is satisfied, then
\[
W_p^{(2)}\Big{(}P_{\Pi_{n,m}},P_{\Pi_k^*}\Big{)} = c_p''(\gamma,\rho)A(m)(1+o_P(1)),\quad \mbox{as $n \rightarrow \infty$},
\] 
with $c_p''(\gamma,\rho)=\left(\int_0^\infty \Psi_{\gamma,\rho}(z)^p e^{-1/z}\frac{\mathrm{d}z}{z^2}\right)^{1/p}$.
\end{enumerate}
\end{theorem}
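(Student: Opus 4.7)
}

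\emph{Part (i).} I will first express $F$ through the unit Fr\'echet variable $Z$ (with $\bbP(Z\leq z)=e^{-1/z}$): the identity $V(t)=F^\leftarrow(e^{-1/t})$ gives $V(Z)\rightsquigarrow F$, so that $V(mZ)\rightsquigarrow F^m$ is the common law of the raw block maxima, while $(Z^\gamma-1)/\gamma \rightsquigarrow G_\gamma$. Taking the natural normalization $b_m=V(m)$, $a_m=a(m)$ and using the optimal coupling by probability integral transform of Example~\ref{ex:1}, the distance $W_p(P_{(M_1-b_m)/a_m},G_\gamma)$ reads as an $L^p([0,1])$-norm of quantile differences; the change of variable $u=e^{-1/z}$, $\rmd u = e^{-1/z}z^{-2}\,\rmd z$, rewrites it as $A_p''(m)$. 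Theorem~\ref{theo:wasserstein-emp-measure}, applied with sample size $k$ to the underlying distributions $P_{(M_1-b_m)/a_m}$ and $G_\gamma$, then propagates the identity to the empirical measures and yields both displays in (i).

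\emph{Parts (ii) and (iii).} Both reduce to evaluating $A_p''(m)$ as $m\to\infty$, which I would do by dominated convergence against the Fr\'echet weight $e^{-1/z}z^{-2}\,\rmd z$. Pointwise, the integrand vanishes under~\eqref{eq:first-order-condition} in (ii), and division by $A(m)^p$ together with~\eqref{eq:second-order-condition-BM} gives $|\Psi_{\gamma,\rho}(z)|^p$ in (iii). For domination I split $(0,\infty)$ at $z=1$. On $[1,\infty)$ I invoke the Drees-type uniform inequalities for first and second order extended regular variation (see e.g.\ \citet[Theorem~B.2.18]{dHF06}): for any small $\varepsilon>0$ and $m$ large, the integrand is dominated by a constant times $z^{p(\gamma+\varepsilon)}$, and the restriction $p\in[1,1/\gamma_+)$ ensures integrability of $z^{p(\gamma+\varepsilon)-2}$ at infinity for $\varepsilon$ small. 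On $(0,1)$ I exploit the finite left endpoint assumption $x_*>-\infty$: since $V(mz)\in[x_*,V(m)]$, the integrand is bounded by a polynomial-in-$m$ envelope (from regular variation of $V$ and $a$) times a function of $z$; as $e^{-1/z}z^{-2}$ decays faster than any polynomial at $0^+$, after enlarging by an inessential factor this produces a finite, $m$-independent $L^1$-majorant on $(0,1)$. Dominated convergence then delivers $A_p''(m)\to 0$ in (ii) and $A_p''(m)\sim c_p''(\gamma,\rho)A(m)$ in (iii); part~(i) transfers both conclusions to $W_p^{(2)}(P_{\Pi_{n,m}},P_{\Pi_k^*})$.

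\emph{Main obstacle.} The delicate point is the control of the integrand near $z=0^+$ when $\gamma\leq 0$, since the target quantile $(z^\gamma-1)/\gamma$ itself diverges there (as $-\infty$, or as $\log z$ when $\gamma=0$) and the naive envelope $(V(m)-x_*)/a(m)$ need not be bounded uniformly in~$m$. The combination of the super-polynomial decay of the Fr\'echet weight $e^{-1/z}$ near $0$ with the finite left endpoint hypothesis is exactly what is needed to produce a uniform $L^p$-majorant; this is the technical core of (ii) and (iii) and is also what forces the assumption $x_*>-\infty$ to appear in the statement.
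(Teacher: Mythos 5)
Your proposal matches the paper's proof in all essentials. Part~(i) is identical: the change of variable $u=e^{-1/z}$ in the optimal-coupling expression from Example~\ref{ex:1}, with $b_m=V(m)$, $a_m=a(m)$, gives $A_p''(m)$, and then Theorem~\ref{theo:wasserstein-emp-measure} transfers the identity to empirical measures of size $k$. For parts~(ii) and~(iii) you correctly isolate the core technical obstacle (the Drees-type uniform bounds fail when $mz$ is small, and the centering $(V(m)-x_*)/a(m)$ is not uniformly bounded in $m$ when $\gamma\leq 0$) and invoke the right cure (finite left endpoint plus the super-polynomial decay of $e^{-1/z}$ near the origin).

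The one point where your bookkeeping differs from the paper's, and where your write-up is a little loose, is the construction of the ``$m$-independent $L^1$-majorant'' on $(0,1)$. The paper does not proceed by a single DCT majorant on $(0,1)$: it splits the integral at $m_0/m$, applies the Drees bound on $[m_0/m,\infty)$, and then \emph{directly} bounds the contribution of $(0,m_0/m)$ by $2^{p-1}(2M/a(m))^p e^{-m/m_0}$ plus the explicit piece involving $(z^\gamma-1)/\gamma$, and checks that this tail vanishes as $m\to\infty$ using $a(m)^{-p}e^{-m/m_0}\to 0$. Your route through a genuinely $m$-independent majorant can be made rigorous, but it needs an extra observation that you do not state: on the set $\{z<m_0/m\}$ one has $m<m_0/z$, so a polynomial-in-$m$ bound such as $(V(m)-x_*)/a(m)\leq K m^{\alpha}$ (Potter) becomes $\leq K(m_0/z)^{\alpha}$, a function of $z$ only; only then does $e^{-1/z}z^{-2}$ absorb it. Either route works; the paper's is the more elementary since it avoids even needing a uniform majorant on the tiny left piece.
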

Probability weighted moments in the BM method can then be analyzed similarly as in the PoT method with similar results as in Corollary~\ref{cor:AN-pwm-estimators}.

\section{Proofs}
\subsection{Proofs related to Section~\ref{sec:2}}

\begin{proof}[Proof of Theorem~\ref{theo:wasserstein-emp-measure}]
The case $W_p(P,P^*)=0$ is trivial because we have then $P=P^*$ so that the two samples have the same distribution $P_{\Pi_n}=P_{\Pi_n^*}$ and $W_p^{(2)}( P_{\Pi_n},P_{\Pi_n^*})=0$. We assume in the sequel that $W_p(P,P^*)>0$. 

\smallskip\noindent
\textit{Proof of the upper bound $W_p^{(2)}( P_{\Pi_n},P_{\Pi_n^*})\leq W_p(P,P^*)$.}\\
For the sake of generality, we consider $p\in [1,\infty]$ and do not assume the metric space $(\mathcal{X},d)$ complete and separable. Let $\varepsilon >0$ and $(\tilde{X},\tilde{X}^*)$ be a coupling between $P$ and $P^*$ such that
\[
\| d(\tilde{X},\tilde{X}^*)\|_{L^p} \leq (1+\varepsilon)W_p(P,P^*).
\]
Taking i.i.d copies $(\tilde{X}_1,\tilde{X}_1^*),...,(\tilde{X}_n,\tilde{X}_n^*)$ of $(\tilde{X},\tilde{X}^*)$, we obtain the coupling
\[
\tilde{\Pi}_n=\frac{1}{n}\sum_{i=1}^n \varepsilon_{\tilde{X}_i}\overset{d}{=} \Pi_n
\quad\mbox{and}\quad
\tilde{\Pi}_n^*=\frac{1}{n}\sum_{i=1}^n \varepsilon_{\tilde{X}_i^*}\overset{d}{=} \Pi_n^*.
\]
For fixed $\omega$, define $(Z,Z^*)=(\tilde{X}_\iota(\omega),\tilde{X}^*_\iota(\omega))$ with $\iota$ uniformly distributed on $\{1,\ldots,n\}$. Clearly $Z\rightsquigarrow \tilde{\Pi}_n(\omega)$ and $Z^*\rightsquigarrow \tilde{\Pi}_n^*(\omega)$ so that
\begin{equation}\label{eq:wem1}
W_p(\tilde{\Pi}_n(\omega),\tilde{\Pi}_n^*(\omega))\leq\|d(Z,Z^*)\|_{L^p}.
\end{equation}
In the case $1\leq p<\infty$, Equation~\eqref{eq:wem1} yields
\[
W_p(\tilde{\Pi}_n(\omega),\tilde{\Pi}_n^*(\omega))\leq\|d(Z,Z^*)\|_{L^p}=\left( \frac{1}{n}\sum_{i=1}^n d(\tilde{X}_i\tilde{X}_i^*)^p\right)^{1/p}
\]
and we deduce
\begin{align*}
W_p^{(2)}(P_{\Pi_n},P_{\Pi_n^*})&\leq \left[\mathbb{E}(W_p(\tilde{\Pi}_n,\tilde{\Pi}_n^*)^p)\right]^{1/p} \\
&\leq\left[\mathbb{E}\left( \frac{1}{n}\sum_{i=1}^n d(\tilde{X}_i,\tilde{X}_i^*)^p\right)\right]^{1/p} \\
&\leq \left[\mathbb{E}( d(\tilde{X}_i,\tilde{X}_i^*)^p)\right]^{1/p} \\
&\leq (1+\varepsilon)W_p(P,P^*).
\end{align*}
In the case $p =+\infty$, Equation~\eqref{eq:wem1} yields similarly
\[
W_\infty(\tilde{\Pi}_n(\omega),\tilde{\Pi}_n^*(\omega))\leq\|d(Z,Z^*)\|_{L^\infty}=\max_{1 \leq i \leq n}d(\tilde{X}_i(\omega),\tilde{X}_i^*(\omega))
\]
and we get
\begin{align*}
W_\infty(P_{\Pi_n},P_{\Pi_n^*})&\leq \|W_\infty(\tilde{\Pi}_n,\tilde{\Pi}_n^*)\|_{L^\infty} \\
&\leq \Big\|\max_{1 \leq i \leq n}d(\tilde{X}_i,\tilde{X}_i^*)\Big\|_{L^\infty} \\
 &\leq \|d(\tilde{X},\tilde{X}^*)\|_{L^\infty} \\
&\leq (1+\varepsilon)W_\infty(P,P^*).
\end{align*}
In both cases, we obtain the announced upper bound by letting $\varepsilon\to 0$.
 
\smallskip\noindent
\textit{Proof of the lower bound $W_p^{(2)}( P_{\Pi_n},P_{\Pi_n^*})\geq W_p(P,P^*)$.}\\
We assume now the metric space $(\mathcal{X},d)$ complete and separable and $p\in [1,\infty)$. We first consider the case $p=1$. By the Kantorovich-Rubinstein duality \citep[Remark 6.5]{V09},
\[
W_1(P,P^*)=\inf \int_{\mathcal{X}} \varphi \, \mathrm{d}(P-P^*)
\]
with the infimum taken over all 1-Lipschitz functions $\varphi : \mathcal{X} \mapsto \mathbb{R}$. Hence, for all $\varepsilon>0$, there exists a 1-Lipschitz function $\varphi : \mathcal{X} \mapsto \mathbb{R}$ such that 
\begin{equation}\label{eq:wem3}
\int_{\mathcal{X}} \varphi \,\mathrm{d}(P-P^*) \geq (1-\varepsilon)W_1(P, P^*).
\end{equation}
Define the map $\varphi^\sharp: \mathcal{W}_1(\mathcal{X})\mapsto\mathbb{R}$ by 
$\varphi^\sharp(\pi)=\int_{\mathcal{X}}\varphi \,\mathrm{d}\pi$.
The integral is well defined because $\varphi$ is Lipschitz and $\pi\in \mathcal{W}_1(\mathcal{X})$. We prove below that $\varphi^\sharp$ is 1-Lipschitz: for $\pi_1,\pi_2 \in \mathcal{W}_1(\mathcal{X})$ and $(Z_1,Z_2)$ a coupling between $\pi_1$ and $\pi_2$, 
\[
\varphi^\sharp(\pi_1)-\varphi^\sharp(\pi_2)
= \int_{\mathcal{X}} \varphi\,\mathrm{d}(\pi_1-\pi_2)=\mathbb{E}[\varphi(Z_1)-\varphi(Z_2)]
\]
whence, since $\varphi$ is $1$-Lipschitz,
\[
|\varphi^\sharp(\pi_1)-\varphi^\sharp(\pi_2)|
\leq \mathbb{E}[\left|\varphi(Z_1)-\varphi(Z_2)\right|] \leq \mathbb{E}[d(Z_1,Z_2)].
\]
Taking the infimum over all couplings $(Z_1,Z_2)$, we get
\[
|\varphi^\sharp(\pi_1)-\varphi^\sharp(\pi_2)| \leq W_1(\pi_1,\pi_2),
\]
proving that $\varphi^\sharp$ is $1$-Lipschitz. 
Using the Kantorovich-Rubinstein duality again, we deduce
\begin{equation}\label{eq:wem4}
W_1^{(2)}\Big{(} P_{\Pi_n},P_{\Pi_n^*}\Big{)}\geq \int_{\mathcal{W}_1(\mathcal{X})}\varphi^\sharp \,\mathrm{d}(P_{\Pi_n}-P_{\Pi_n^*}),
\end{equation}
where the right hand side equals
\begin{align}
\int_{\mathcal{W}_1(\mathcal{X})}\varphi^\sharp \,\mathrm{d}(P_{\Pi_n}-P_{\Pi_n^*})
&= \mathbb{E}[\varphi^\sharp(\Pi_n)-\varphi^\sharp(\Pi_n^*)] \nonumber\\
&=\mathbb{E}\Big[\frac{1}{n}\sum_{i=1}^n \varphi(X_i)-\frac{1}{n}\sum_{i=1}^n \varphi(X_i^*)\Big] \nonumber \\
&=\mathbb{E}\left[\varphi(X)- \varphi(X^*)\right] \nonumber \\
&=\int_{\mathcal{X}} \varphi \,\mathrm{d}(P-P^*)\label{eq:wem5}. 
\end{align}
Equations \eqref{eq:wem3}, \eqref{eq:wem4} and \eqref{eq:wem5} together entail
\[
W_1^{(2)}( P_{\Pi_n},P_{\Pi_n^*})\geq (1-\varepsilon)W_1(P,P^*).
\]
We obtain the announced lower bound by letting $\varepsilon \to 0$.

We now consider the case $p \in (1,+\infty)$. By the Kantorovich duality \citep[Theorem 5.10(i)]{V09}, for all $\varepsilon>0$ there exist functions $\varphi\in L^1(\mathcal{X},P)$ and $\psi \in L^1(\mathcal{X},P^*)$ such that 
\[
\varphi(x)-\psi(y) \leq d^p(x,y), \quad x,y \in \mathcal{X},
\]
and
\[
\int_\mathcal{X}\varphi \,\mathrm{d}P-\int_\mathcal{X} \psi \,\mathrm{d}P^* \geq (1-\varepsilon)W_p^p(P,P^*).
\]
Define the functions $\varphi^\sharp,\psi^\sharp:\mathcal{W}_p(\mathcal{X})\to\mathbb{R}$ by $\varphi^\sharp(\pi)=\int_\mathcal{X}\varphi \,\mathrm{d}\pi$ and $\psi^\sharp(\pi)=\int_\mathcal{X} \psi \,\mathrm{d}\pi$. We have $\varphi^\sharp\in L^1(\mathcal{W}(\mathcal{X}),P_{\Pi_n})$ and
\[
\int_{\mathcal{W}(\mathcal{X})}\varphi^\sharp \mathrm{d}P_{\Pi_n}= \mathbb{E}[\varphi^\sharp(\Pi_n)]=\mathbb{E}\Big[\frac{1}{n}\sum_{i=1}^n \varphi(X_i)\Big]=\mathbb{E}[\varphi(X)]=\int_{\mathcal{X}}\varphi \mathrm{d}P.
\]
Similarly, $\psi^\sharp\in L^1(\mathcal{W}(\mathcal{X}),P_{\Pi_n^*})$. Furthermore, for $\pi_1,\pi_2 \in \mathcal{W}_p(\mathcal{X})$ and $(Z_1,Z_2)$ a coupling between $\pi_1$ and $\pi_2$, we have 
\[
\varphi^\sharp(\pi_1)-\psi^\sharp(\pi_2)                 =\mathbb{E}[\varphi(Z_1)-\psi(Z_2)]\leq\mathbb{E}[d^p(Z_1,Z_2)], 
\]
and, taking the infimum over all couplings,
\[
\varphi^\sharp(\pi_1)-\psi^\sharp(\pi_2) \leq W_p^p(\pi_1,\pi_2).
\]
We deduce, by the Kantorovich duality,
\begin{align*}
W_p^{(2)}(P_{\Pi_n},P_{\Pi_n^*})^p&\geq \int_{\mathcal{W}_p(X)}\varphi^\sharp \,\mathrm{d}P_{\Pi_n}-\int_{\mathcal{W}_p(X)} \psi^\sharp \,\mathrm{d}P_{\Pi_n^*} \\
&=\int_\mathcal{X}\varphi \,\mathrm{d}P-\int_\mathcal{X} \psi \,\mathrm{d}P^* \\
&\geq (1-\varepsilon)W_p^p(P,P^*).
\end{align*}
Letting $\varepsilon\to 0$, the announced lower bound follows.
\end{proof}

\begin{proof}[Proof of Corollary~\ref{cor:lipschitz-statistic}]
Let $(\tilde{\Pi}_n,\tilde{\Pi}_n^*)$ be a coupling between $P_{\Pi_n}$ and $P_{\Pi_n}$. Since $(S(\tilde{\Pi}_n),S(\tilde{\Pi}_n^*))$ is a coupling between $P_{S_n}$ and $P_{S_n^*}$, 
\begin{align*}
W_p(P_{S_n},P_{S_n^*}) & \leq \mathbb{E}[W_p(S(\tilde{\Pi}_n),S(\tilde{\Pi}_n^*))^p]^{1/p} \\
            & \leq \mathrm{Lip}(S) \mathbb{E}[ W_p(\tilde{\Pi}_n,\tilde{\Pi}_n^*)^p]^{1/p}.
\end{align*}
Taking the infimum in the right hand side over all couplings between $P_{\Pi_n}$ and $P_{\Pi_n^*}$, we get 
\[
W_p(P_{S_n},P_{S_n^*}) \leq \mathrm{Lip}(S)W_p^{(2)}(P_{\Pi_n},P_{\Pi_n^*}).
\]
Theorem \ref{theo:wasserstein-emp-measure} finally entails $W_p(P_{S_n},P_{S_n^*}) \leq \mathrm{Lip}(S)W_p(P,P^*)$.
\end{proof}

\begin{proof}[Proof of Proposition~\ref{prop:lipschitz-statistic}]
Proof of point i): let $\pi_1$ and $\pi_2 \in \mathcal{W}_p(\mathcal{X})$ and $(X_1,X_2)$ be a coupling between $\pi_1$ and $\pi_2$. By the triangle inequality, 
\begin{align*}
	|S(\pi_1)-S(\pi_2)| & = \big|\|d(x_0,X_1)\|_{L^q}-\|d(x_0,X_2)\|_{L^q}\big| \\
	          & \leq \|d(x_0,X_1)-d(x_0,X_2)\|_{L^q} \\
	          & \leq \|d(X_1,X_2)\|_{L^q}.
\end{align*}
Taking the infimum in the right hand side, we get
\[
|S(\pi_1)-S(\pi_2)| \leq W_q(\pi_1,\pi_2) \leq W_p(\pi_1,\pi_2).
\]
Moreover, for $\pi_1=\varepsilon_{x_0}$ and $\pi_2=\varepsilon_{x}$ with $x\neq x_0$, we have $S(\pi_1)=0$, $S(\pi_2)=d(x_0,x)>0$ and $W_p(\pi_1,\pi_2)=d(x_0,x)$. Hence $|S(\pi_1)-S(\pi_2)|=W_p(\pi_1,\pi_2)$ and $\mathrm{Lip}(S)=1$.

Proof of point ii): Let $\pi_1$ and $\pi_2 \in \mathcal{W}_p(\mathcal{X})$. By the triangle inequality in $L^q([0,1],(1-u)^s u^r \mathrm{d}u)$, we have 
\[
|S(\pi_1)-S(\pi_2)| \leq \Big(\int_0^1|F^\leftarrow_{\pi_1}(u)-F^\leftarrow_{\pi_2}(u)|^q(1-u)^s u^r \mathrm{d}u\Big)^{1/q}.
\]
For all $u \in [0,1]$, we have $(1-u)^s u^r \leq 1$. We deduce, using the explicit expression of the Wasserstein distance on the real line (see Example~\ref{ex:1}),
\begin{align*}
|S(\pi_1)-S(\pi_2)| & \leq \Big(\int_0^1|F^\leftarrow_{\pi_1}(u)-F^\leftarrow_{\pi_2}(u)|^q \mathrm{d}u \Big)^{1/q}\\
& \leq W_q(\pi_1,\pi_2)\leq W_p(\pi_1,\pi_2).
\end{align*}
\end{proof}

\subsection{Proofs related to Section~\ref{sec:3}}
\begin{proof}[Proof of Proposition~\ref{prop:cv-pareto-W}]
\textit{Proof of $i)$.} We begin with the proof of Equation~\eqref{eq:prop-cv-pareto-W} by giving the expression of the Wasserstein distance on $\mathcal{X}=[1,\infty)$ with logarithmic distance $d$ given by \eqref{eq:log-dist}. Because the logarithm is an isometry between $([1,\infty),d)$ and $(\mathbb{R},d_2)$ with $d_2$ the usual Euclidean distance, the Wasserstein distance between probability measures $P_1$ and $P_2$ on $(\mathcal{X},d)$ is equal to the Wasserstein distance between the image measures $P_1\circ \log^{-1}$ and $P_2\circ \log^{-1}$ on $(\mathbb{R},d_2)$. Using the explicit form of the Wasserstein distance on $\mathbb{R}$ recalled in Example~\ref{ex:1}, we deduce
\[
W_p(P_1,P_2)=\Big( \int_{0}^1 |\log F_1^\leftarrow(u)-\log F_2^\leftarrow(u)|^p\,\mathrm{d}u\Big)^{1/p},
\]
with $F_1^\leftarrow$ and $F_2^\leftarrow$ the quantile functions of $P_1$ and $P_2$ respectively. Introducing the change of variable $ z=1/(1-u)$ and the tail quantile functions $U_i(t)=F_i^\leftarrow(1-1/t)$, $i=1,2$, we get
\begin{equation}\label{eq:Wd}
W_p(P_1,P_2)=\left( \int_{1}^\infty \left|\log \frac{U_1(z)}{U_2(z)}\right|^p\,\frac{\mathrm{d}z}{z^2}\right)^{1/p}.
\end{equation}
Similarly, in the case $p=\infty$, we obtain
\[
W_\infty(P_1,P_2)=\sup_{u\in (0,1)} \left|\log F_1^\leftarrow(u)-\log F_2^\leftarrow(u)\right|=\sup_{z>1} \left|\log \frac{U_1(z)}{U_2(z)}\right|.
\]
Specializing these formulas when $P_1=P_{U(t)^{-1}X\mid Z>t}$ and $P_2=P_\alpha$, we obtain Equation~\eqref{eq:prop-cv-pareto-W} because 
\[
U_1(z)=\frac{U(tz)}{U(t)} \quad\mbox{and}\quad U_2(z)=z^\gamma.
\]
To check the first equality, one can use the fact that, given $Z>t$, $Z$ has the same distribution as $tZ$ so that $X/U(t)=U(Z)/U(t)$ has the same distribution as $U(tZ)/U(t)$.\\
\textit{Proof of $ii)$.} We next consider the properties of $A_p$ when $F\in D(G_\gamma)$, or equivalently, under the first order condition $\eqref{eq:RV}$. Since the function $U$ is regularly varying at infinity with order $\gamma>0$, Potter's bounds \citep[Proposition B.1.9 p. 366]{dHF06} imply that, for all $\delta > 0$, there exists $t_1 > 1$ such that 
\begin{equation}\label{eq:Pottersbound}
(1-\delta)z^{-\delta} \leq\frac{U(tz)}{z^\gamma U(t)} \leq (1+\delta)z^{\delta}\quad \mbox{for all $t\geq t_1$ and $z>1$}.
\end{equation}
This implies that, for $p\in [1,\infty)$, $A_p(t)$ defined by Equation~\eqref{eq:prop-cv-pareto-W} is bounded for $t\geq t_1$. On the other hand, the monotonicity of $U$ implies that, for $t\in[t_0,t_1]$, $A_p^p(t)\leq \int_{1}^\infty \max \left(\big|\log \frac{U(zt_1)}{z^\gamma U(t_0)}\big|^p,\big|\log \frac{U(zt_0)}{z^\gamma U(t_1)}\big|^p\right) \frac{\rmd z}{z^2}$ , whence $A_p$ is bounded on $[t_0,\infty)$.
Furthermore, the first order condition~\eqref{eq:RV} implies
\[
\lim_{t\to\infty}\Big|\log \frac{U(zt)}{z^\gamma U(t)}\Big|^p =0 \quad \mbox{for all $z>1$}.
\]
Applying the dominated convergence theorem, with the dominating function provided by Potter's bound \eqref{eq:Pottersbound}, we deduce 
$\lim_{t\to\infty} A_p(t)=0$.\\
\textit{Proof of $iii)$.} We finally consider the asymptotic behavior of $A_p$ under the second order condition~\eqref{eq:second-order-condition}. With the auxiliary function
$f(z)=\log\left(U(z)/z^\gamma\right)$, $z>1$, 
the ratio $A_p(t)/A(t)$ can be rewritten as 
\begin{equation}\label{eq:ratio}
\frac{A_p(t)}{A(t)}= \left\{\begin{array}{ll}
 \left(\int_{1}^\infty \left|\frac{f(tz)-f(t)}{A(t)}\right|^p \frac{\rmd z}{z^2}\right)^{1/p}& \mbox{for $1\leq p < \infty$,}\\
 \sup_{z>1} \left|\frac{f(tz)-f(t)}{A(t)}\right| & \mbox{for $p=\infty$.} 
\end{array}\right.
\end{equation}
From the second order condition \eqref{eq:second-order-condition}, the function $f$ satisfies, as $t\to\infty$, 
\begin{equation}\label{eq:ERV}
\frac{f(tz)-f(t)}{A(t)}=\frac{\log\left(\frac{U(tz)}{z^\gamma U(t)}\right)}{A(t)}\sim\frac{\frac{U(tz)}{z^\gamma U(t)}-1}{A(t)} \longrightarrow \frac{z^\rho -1}{\rho}.
\end{equation}
In the terminology of \citet[Definition B.2.3]{dHF06}, the function $f$ is of extended regular variations at infinity with index $\rho\leq 0$.
Equations~\eqref{eq:ratio} and \eqref{eq:ERV} together suggest 
\begin{equation}\label{eq:lim_ApsurA}
\frac{A_p(t)}{A(t)}\longrightarrow \left\{\begin{array}{ll}
 \left(\int_{1}^\infty \left|\frac{z^\rho -1}{\rho}\right|^p \frac{\rmd z}{z^2}\right)^{1/p}& \mbox{for $1\leq p < \infty$,}\\
 \sup_{z>1} \left|\frac{z^\rho -1}{\rho}\right| & \mbox{for $p=\infty$.} 
\end{array}\right.
\end{equation}
To justify the limits in Equation~\eqref{eq:lim_ApsurA}, we use Theorem B.2.18 p.383 in \cite{dHF06}: possibly replacing $A$ by an asymptotically equivalent function, we can assume that for arbitrary $\varepsilon,\delta >0$, there exists $t_0$ such that
\begin{equation}\label{eq:Drees}
\left| \frac{f(tz)-f(t)}{A(t)}-\frac{z^\rho -1}{\rho}\right| \leq\varepsilon z^{\rho+\delta},\quad \mbox{for all $t\geq t_0$, $z>1$}.
\end{equation}
In the case $p=[1,\infty)$, the limit \eqref{eq:lim_ApsurA} is a consequence of the dominated convergence Theorem where the pointwise convergence is given by Equation~\eqref{eq:ERV} and the dominating function by Equation~\eqref{eq:Drees}. \\
In the case $p=\infty$ and $\rho<0$, the right hand side in Equation~\eqref{eq:Drees} can be made arbitrary small uniformly in $z>1$ by choosing $\varepsilon$ and $\delta$ small enough (note that $z^{\delta+\rho}<1$ for $\delta+\rho<0$ and $z>1$). This is enough to justify the limit \eqref{eq:lim_ApsurA} and the value of the limit is $1/|\rho|$.\\
In the case $p=\infty$ and $\rho=0$, Equation~\eqref{eq:Drees} implies
\[
\frac{f(tz)-f(t)}{A(t)}\geq \log(z)-\varepsilon z^{\delta}
\]
For arbitrary $M>0$, one can choose $z>1$ large enough and $\varepsilon,\delta>0$ small enough so that the right hand side is larger than $M$ for $t\geq t_0$. Then, for $t\geq t_0$, 
\[
\frac{A_\infty(t)}{A(t)}= \sup_{z>1} \left|\frac{f(tz)-f(t)}{A(t)}\right| \geq M, 
\]
proving $\lim_{t\to\infty}A_\infty(t)/A(t)=+\infty$.
\end{proof}

\begin{proof}[Proof of Theorem~\ref{thm:Wass_dist_Pi_EVT}] \textit{Proof of $i)$.} For a random variable $Z$ with standard unit Pareto distribution, the conditional distribution of $Z$ given $Z>t$ is equal to the distribution of $tZ$. We deduce that the conditional distribution of the $k$ top order statistics $(Z_{n-k+1,n},\ldots,Z_{n,n})$ given $Z_{n-k,n}=t$ is equal to the distribution of $(t\tilde Z_{1,k},\ldots,t\tilde Z_{k,k})$ with $\tilde Z_{1,k}\leq\cdots\leq \tilde Z_{k,k} $ the order statistics of an i.i.d. sample $\tilde Z_1,\ldots,\tilde Z_k$ with standard unit Pareto margins. As a consequence, the conditional distribution of $\Pi_{n,k}$ given $Z_{n-k,n}=t$ is equal to the distribution of 
\[
\tilde\Pi_k=\frac{1}{k}\sum_{i=1}^k \varepsilon_{\tilde X_i}\quad \mbox{with $\tilde X_i=U(t\tilde Z_i)/U(t)$}.
\]
Theorem~\ref{theo:wasserstein-emp-measure} implies
\[
W_p^{(2)}\Big{(}P_{\Pi_{n,k}\mid Z_{n-k,n}=t},P_{\Pi_{k}^*}\Big{)}=W_p(P_{\tilde X},P_\alpha)=A_p(t).
\]
\textit{Proof of $ii)$.} For an intermediate sequence $k=k(n)$, $Z_{n-k,n}$ converges to $\infty$ in probability as $n\to\infty$. By Proposition~\ref{prop:cv-pareto-W}, $A_p$ has limit $0$ at infinity and it follows 
\[
W_p^{(2)}\Big{(}P_{\Pi_{n,k}\mid Z_{n-k,n}},P_{\Pi_{k}^*}\Big{)}=A_p(Z_{n-k,n})\to 0\quad \mbox{in probability.}
\]
\textit{Proof of $iii)$.} By Proposition~\ref{prop:cv-pareto-W} point $iii)$, the second order condition implies $A_p(t)\sim c_p(\rho)A(t)$ as $t\to\infty$. On the other hand, for an intermediate sequence $k=k(n)$, $Z_{n-k,n}=(n/k)(1+o_P(1))$ as $n\to\infty$. Combining the two results, we deduce 
\[
W_p^{(2)}\Big{(}P_{\Pi_{n,k}\mid Z_{n-k,n}},P_{\Pi_{k}^*}\Big{)}=A_p(Z_{n-k,n})=c_p(\rho)A(n/k)(1+o_P(1)).
\]
\end{proof}

\begin{proof}[Proof of Corollary~\ref{cor:Wass_dist_Pi_EVT}]
When $X$ is positive and bounded away from 0, the equality $W_p^{(2)}(P_{\Pi_{n,k}\mid Z_{n-k,n}=t},P_{\Pi_{k}^*})=A_p(t)$ holds for all $t\geq 1$. Integrating with respect to the distribution of $ Z_{n-k,n}$ and using the convexity of the Wasserstein distance \citep[Theorem 4.8]{V09}, we deduce
\[
W_p^{(2)}\Big{(}P_{\Pi_{n,k}},P_{\Pi_{k}^*}\Big{)}\leq \mathbb{E}[A_p(Z_{n-k,n})]=\int_1^\infty A_p(t)\beta_{n-k,k+1}\left(1-\frac{1}{t}\right) \frac{\rmd t}{t^2}.
\]
The last equality holds since $Z_{n-k,n}$ has density $\beta_{n-k,k+1}(1-1/t)/t^2$. For an intermediate sequence $k=k(n)$, $Z_{n-k,n}\to \infty$ in probability and Proposition~\ref{prop:cv-pareto-W} $ii)$ implies that $A_p(Z_{n-k,n})\to 0$ in probability. Since $A_p(t)$ is bounded, $\mathbb{E}[A_p(Z_{n-k,n})]\to 0$ whence the Wasserstein distance converges to $0$.
\end{proof}

\begin{proof}[Proof of Corollary~\ref{cor:Hill_AN}]
By the triangular inequality,
\begin{align*}
& W_p\Big{(}P_{\sqrt{k}(\hat\gamma^{Hill}_{n,k}-\gamma)\mid Z_{n-k,n}=t},\mathcal{N}(0,\gamma^2)\Big{)}\\
\leq &\ W_p\Big{(}P_{\sqrt{k}(\hat\gamma_{n,k}-\gamma)\mid Z_{n-k,n}=t},P_{\sqrt{k}(\hat{\gamma}^{*}_{k}-\gamma)}\Big{)}+W_p\Big{(}P_{\sqrt{k}(\hat{\gamma}^{*}_{k}-\gamma)},\mathcal{N}(0,\gamma^2)\Big{)}.
\end{align*}
The first term is estimated by Corollary~\ref{cor:lipschitz-statistic} with $\hat{\gamma}_{n,k}=S(\Pi_{n,k})$, $\hat{\gamma}^*_k=S(\Pi_k^*)$ and $S: \mathcal{W}_p([1,\infty))\to\mathbb{R}$ given by $\varphi(\pi)=\int_1^{+\infty}\log(x)\pi(\mathrm{d}x)$. Theorem~\ref{thm:Wass_dist_Pi_EVT} and Proposition~\ref{prop:lipschitz-statistic} stating that $S$ is Lipschitz with $\mathrm{Lip}(S)=1$ imply 
\[
W_p\Big{(}P_{\hat\gamma_{n,k}\mid Z_{n-k,n}=t},P_{\hat\gamma_{n,k}^*}\Big{)}\leq \mathrm{Lip}(S) W_p^{(2)}\Big{(}P_{\Pi_{n,k}\mid Z_{n-k,n}=t},P_{\Pi_{k}^*}\Big{)}=A_p(t).
\]
A change of variable entails
\[
W_p\Big{(}P_{\sqrt{k}(\hat\gamma_{n,k}-\gamma)\mid Z_{n-k,n}=t},P_{\sqrt{k}(\hat{\gamma}^{*}_{k}-\gamma)}\Big{)}\leq \sqrt{k}A_p(t).
\]
The second term is handled using the central limit theorem in Wasserstein distance derived from Stein's method, see \citet{R11}. We write 
\[
\sqrt{k}(\hat{\gamma}^{*}_{k}-\gamma)=\frac{\gamma}{\sqrt{k}}\sum_{i=1}^k\tilde{X}_i\quad \mbox{with $\tilde{X}_i=(\log(X_i^*)-\gamma)/\gamma$}.
\] 
An application of \citet[Theorem 3.2]{R11} yields
\begin{align*}
W_p\left(P_{k^{-1/2}\sum_{i=1}^k\tilde X_i},\mathcal{N}(0,1)\right)&
\leq k^{-3/2}\sum_{i=1}^k \mathbb{E}|\tilde{X}_i|^3+\sqrt{2/\pi} k^{-1}\left(\sum_{i=1}^k \mathbb{E}\tilde{X}_i^4\right)^{1/2}\\
&=(4+3\sqrt{2/\pi})\frac{1}{\sqrt k},
\end{align*}
where the constants $\mathbb{E}|\tilde{X}_i|^3=4$ and $\mathbb{E}\tilde{X}_i^4=9$ are easily computed since $\tilde X_i$ is a centered standard exponential random variable. With a change of variable, we deduce
\begin{equation}\label{eq:hillpareto}
W_p\Big{(}P_{\sqrt{k}(\hat{\gamma}^{*}_{k}-\gamma)},\mathcal{N}(0,\gamma^2)\Big{)} \leq (4+3\sqrt{2/\pi})\frac{\gamma}{\sqrt{k}}.
\end{equation}
\end{proof}

\begin{proof}[Proof of Corollary~\ref{cor:Weissman_AN}]
We write
\[
v_n^{-1}\log \frac{\hat q(\alpha_n)}{q(\alpha_n)}=I+II+III
\]
with 
\begin{align*}
I&=\sqrt{k}(\hat\gamma_{n,k}-\gamma),\\
II&=v_n^{-1}\log \frac{X_{n-k,n}}{U(n/k)},\\
III&=v_n^{-1}\log\frac{(k/(n\alpha_n))^\gamma U(n/k)}{U(1/\alpha_n)}.
\end{align*}
 The asymptotic behavior of the first term is given by~Corollary \ref{cor:Hill_AN} together with Proposition~\ref{prop:cv-pareto-W} $iii)$:
 \[
 W_p\Big{(}P_{\sqrt{k}(\hat\gamma_{n,k}-\gamma)|Z_{n-k,n}},\mathcal{N}(0,\gamma^2)\Big{)}=O_P\left(\sqrt{k}A(n/k)+1/\sqrt{k}\right).
 \]
 The second term can be handled seeing that $X_{n-k,n}=U(Z_{n-k,n})$ with $Z_{n-k,n}=\frac{n}{k}(1+o_P(1))$. This implies  
\begin{align*}
II&=v_n^{-1}\log\frac{U(Z_{n-k,n})}{U(n/k)}\\
&=v_n^{-1}\left(\frac{U(Z_{n-k,n})}{U(n/k)}-1 \right)(1+o_P(1)) \\
&=v_n^{-1}\Big((Z_{n-k,n}k/n)^\gamma O_P(A(n/k)) \Big) (1+o_P(1))\\
&=o_P\left(\sqrt{k} A(n/k)\right),
\end{align*} 
where the third line relies on the second order condition~\eqref{eq:second-order-condition}. \\
 The third term is deterministic and satisfies
 \begin{align*}
III&= v_n^{-1}\left(\frac{(k/(n\alpha_n))^\gamma U(k/n)}{U(1/\alpha_n)}-1\right)(1+o(1))\\
&= v_n^{-1}\left(A(n/k)\frac{(k/(n\alpha_n))^\rho-1}{\rho} \right)(1+o(1))\\
&=o_P\left(\sqrt{k} A(n/k)\right),
 \end{align*}
where the second equality relies on \cite[Theorem 2.3.9.]{dHF06} and on the fact that $k/(n\alpha_n)\to\infty$ and $\rho<0$.
\end{proof}

\begin{proof}[Proof of Corollary~\ref{cor:AN-pwm-estimators} ]
First note that $\hat{M}_{k,n,s}$ and $\hat{M}^*_{k,s}$ for $s=0,1$ are the weighted probability moments associated with the empirical measures $\Pi_{n,k}$ and $\Pi_k^*$ respectively. According to Proposition \ref{prop:lipschitz-statistic} $ii)$, the weighted probability moment functional $S$ (with $q=1$, $r=0 $ and $s=0,1$) is 1-Lipschitz so that Corollary \ref{cor:lipschitz-statistic} implies 
\[
W_p\left(P_{(\hat{M}_{k,n,s})_{ s=0,1} \mid Z_{n-k,n}=t},P_{(\hat{M}_{k,s}^*)_{s=0,1}}\right)\leq W_p^{(2)}\left(P_{\Pi_{n,k} \mid Z_{n-k,n}=t},P_{\Pi_k^*}\right).
\]
Under the assumption $\sqrt{k}A(n/k)\to 0$, Theorem~\ref{thm:cv-gpd-W} ii) entails 
\[
W_p\left(P_{\sqrt{k}(\hat{M}_{k,n,s}-m_s)_{s=0,1}\mid Z_{n-k,n}},P_{\sqrt{k}(\hat{M}_{k,s}^*-m_s)_{s=0,1}}\right)=O_P\left( \sqrt{k}A(n/k)\right) \longrightarrow 0.
\]
\citet[Equation 5.3]{H86} states the joint asymptotic normality of $\hat{M}^*_{k,0}$ and $\hat{M}^*_{k,1}$: 
\[
\sqrt{k}\left(\hat{M}^*_{k,0}-m_0,\hat{M}^*_{k,1}-m_1\right)\overset{d}{\longrightarrow}\mathcal{N}(0,\Gamma)\quad \mbox{as $k\to\infty$}.
\]
Then Lemma~1 in Supplement A implies the joint asymptotic normality of $\hat{M}_{k,n,0}$ and $\hat{M}_{k,n,1}$:
\[
\sqrt{k}\left(\hat{M}_{k,n,0}-m_0,\hat{M}_{k,n,1}-m_1\right)\overset{d}{\longrightarrow}\mathcal{N}(0,\Gamma)\quad \mbox{as $k\to\infty$}.
\]
In view of Equation~\eqref{eq:def-PWM}, the asymptotic normality of $(\hat\gamma_{n,k}^{PWM},\hat\sigma_{n,k}^{PWM})$ follows by the delta method, see e.g. van der Vaart \cite{vdV98}, Theorem 3.1 p.26). Since $g$ is infinitely differentiable on $\mathbb{R}^2\backslash\{(x,y), x=2y\}$ and $(\gamma,1)$ does belong to this set, we can apply the delta method and deduce 
\begin{align*}
&\sqrt{k}\left((\hat{\gamma}_{n,k}^{PWM},\hat{\sigma}_{n,k}^{PWM})-(\gamma,1)\right)\\
=&\sqrt{k}\left(g(\hat{M}_{k,n,0},\hat{M}_{k,n,1})-g(m_0,m_1)\right)\overset{d}{\longrightarrow}\mathrm{d}g_{(\gamma,1)}(N),
\end{align*} 
with $\mathrm{d}g_{(\gamma,1)}$ the derivative of $f$ at $(\gamma,1)$ and $N\rightsquigarrow\mathcal{N}(0,\Sigma)$. We conclude the proof by noting that $\mathrm{d}g_{(\gamma,1)}(N)\stackrel{d}=\mathcal{N}(0,\Gamma)$.
\end{proof}

\section*{Acknowledgements}
The current manuscript benefited a lot from constructive comments and fruitful suggestions after presentations of the work at an early stage of the project. In particular,  Laurens de Haan suggested Proposition~\ref{prop:cv-pareto-W} $iii)$ and its proof, Ana Ferreira suggested to consider the asymptotic regime with bias $\sqrt k A(n/k)\to \lambda$ (section 3.3.1), Chen Zhou suggested to consider the block maxima method (section 3.3.3) and St\'ephane Girard suggested to consider Weissman extreme quantile estimate (Corollary~\ref{cor:Weissman_AN}). We gratefully acknowledge their contributions to the paper.

\appendix
\section{Supplementary material}

\begin{proof}[Proof of Theorem~\ref{thm:bias-case}] Similar arguments as in the proof of Theorem~\ref{thm:Wass_dist_Pi_EVT} imply
\[
W_p^{(2)}\Big{(}P_{\Pi_{n,k}\mid Z_{n-k,n}=t},P_{\Pi_{k,t}^*}\Big{)}=W_p\Big(P_{U(t)^{-1}X\mid Z>t},P_{X^*(1+ A(t)\frac{(X^*)^{\rho/\gamma}-1}{\rho})}\Big)
\]
where $X^*\rightsquigarrow P_\alpha$. Introducing a random variable $Z$ with standard unit Pareto distribution and the coupling 
\[
\frac{U(tZ)}{U(t)}\rightsquigarrow P_{U(t)^{-1}X\mid Z>t}\quad\mbox{and}\quad Z^\gamma\Big(1+ A(t)\frac{Z^{\rho}-1}{\rho}\Big)\rightsquigarrow P_{X^*(1+ A(t)\frac{(X^*)^{\rho/\gamma}-1}{\rho})},
\]
we deduce, similarly as in Equation~\eqref{eq:Wd},
\[
W_p^{(2)}\Big{(}P_{\Pi_{n,k}\mid Z_{n-k,n}=t},P_{\Pi_{k,t}^*}\Big{)}\leq\Big(  \int_{1}^\infty \Big|\log \frac{U(zt)}{z^\gamma U(t)}-\log\Big( 1+A(t)\frac{z^\rho-1}\rho\Big)\Big|^p \frac{\rmd z}{z^2}\Big)^{1/p}.
\]
Equation~\eqref{eq:Drees} with $f(z)=U(z)/z^\gamma$ implies
\[
\Big|\log \frac{U(zt)}{ z^\gamma U(t)}-A(t)\frac{z^\rho-1}{\rho}\Big|\leq \varepsilon |A(t)| z^{\rho+\delta},
\]
and,  with  $x-2x^2\leq \log(1+x)\leq  x$ for $x\geq -1/2$, we deduce 
\[
\Big|\log \frac{U(zt)}{ z^\gamma U(t)}-\log\Big( 1+A(t)\frac{z^\rho-1}\rho\Big)\Big|\leq \varepsilon |A(t)| z^{\rho+\delta}+2A(t)^2\Big|\frac{z^\rho-1}\rho \Big|^2.
\]
Taking the $L^p$-norm and $\varepsilon>0$ being arbitrary, we see that 
\[
W_p^{(2)}\Big{(}P_{\Pi_{n,k}\mid Z_{n-k,n}=t},P_{\Pi_{k,t}^*}\Big{)}=o(A(t))\quad \mbox{as $t\to\infty$}.
\]
With the same notations as in the proof of Corollary~\ref{cor:Hill_AN}, we compare \[
\hat\gamma_{n,k}=\int_1^\infty \log(x)\Pi_{n,k}(\rmd x)\quad\mbox{and}\quad\hat\gamma_{k,t}^*=\int_1^\infty \log(x)\Pi_{k,t}^*(\rmd x).
\] 
By Corollary~\ref{cor:lipschitz-statistic},
\[
W_p\Big{(}P_{\sqrt{k}(\hat\gamma_{n,k}-\gamma)\mid Z_{n-k,n}},P_{\sqrt{k}(\hat\gamma_{k,t}^*-\gamma)}\Big{)}\leq \sqrt{k} W_p^{(2)}(P_{\Pi_{n,k}\mid Z_{n-k,n}},P_{\Pi_{k,t}^*}).
\]
On the other hand,
\[
\sqrt{k}(\hat\gamma_{k,t}^*-\gamma)=\sqrt{k}(\hat\gamma_{k}^*-\gamma)+\frac{1}{\sqrt{k}}\sum_{i=1}^k\log\Big(1+A(t)\frac{Z^\rho-1}\rho\Big).
\]
The first term is compared to $\mathcal{N}(0,\gamma^2)$ thanks to Equation~\eqref{eq:hillpareto}:
\[
W_p\Big{(}P_{\sqrt{k}(\hat\gamma_k^*-\gamma)},\mathcal{N}(0,\gamma^2)\Big{)}\leq \frac{4+3\sqrt{2/\pi}}{\sqrt{k}}.
\]
The second term is compared to  $\sqrt{k}A(t)b(\rho)$. Using the triangle inequality and $x-2x^2\leq \log(1+x)\leq x$ for $x\geq -1/2$, we get
\begin{align*}
&\Big\|\frac{1}{\sqrt k}\sum_{i=1}^k\log\Big(1+A(t)\frac{Z_i^\rho-1}\rho\Big)-\sqrt{k}A(t)b(\rho)\Big\|_{L^p}\\
&\leq \sqrt{k}A(t)\Big\|\frac{1}{k}\sum_{i=1}^k\Big(\frac{Z_i^\rho-1}\rho-b(\rho)\Big)\Big\|_{L^p}+\sqrt{k} A(t)^2\Big\|\frac{1}{k}\sum_{i=1}^k\Big|\frac{Z_i^\rho-1}\rho\Big|^2\Big\|_{L^p}\\
&=o(\sqrt{k} A(t)).
\end{align*}
The last line is a consequence of the law of large numbers in $L^p$ norm. Combining the different estimates, we get
\[
W_p\Big(P_{\sqrt{k}(\hat\gamma_{n,k}-\gamma)\mid Z_{n-k,n}=t},\mathcal{N}\big(\sqrt{k}A(t)b(\rho),\gamma^2\big)\Big)=O(1/\sqrt{k})+o(\sqrt{k}A(t))
\]
and the asymptotic normality of the Hill estimator with bias $\lambda b(\rho)$ follows if $k\to\infty$ and $\sqrt{k} A(n/k)\to \lambda$.

\end{proof}

\begin{proof}[Proof of Theorem~\ref{thm:cv-gpd-W}]
\textit{Proof of $i)$.} The structure of the proof is the same as the proof of Proposition \ref{prop:cv-pareto-W}. The tail quantile function of $P_{(X-U(t))/a(t)|T>t}$ is given by $(U(tz)-U(t))/a(t)$, $z>1$ and the tail quantile function of the GP distribution  by $(z^\gamma-1)/\gamma$. The explicit expression of the Wasserstein distance given in Example \ref{ex:1} yields 
\[
W_p\Big{(}P_{(X-U(t))/a(t)|T>t},H_\gamma\Big{)}=A_p'(t).
\]
The convergence of $A_p'$ to 0 is a consequence of the dominated convergence Theorem with the domination condition provided by Drees's Theorem (Theorem B.2.18 p.383 in \cite{dHF06}): for all $\varepsilon,\delta >0$, there exists $t' >0$ such that
\[
\left|\frac{U(tz)-U(t)}{a(t)}-\frac{z^\gamma-1}{\gamma}\right|\leq \varepsilon z^{\gamma+\delta}, \quad z >1, t>t',
\] 
where $z^{(\gamma+\delta)p}/z^{2}$ is integrable on $[1,+\infty)$ for $\gamma p <1$ and $\delta$ small enough. 

Under the second order condition \eqref{eq:second-order-condition2},
\[
 \frac{A_p'(t)}{A(t)}=\left(\int_1^\infty \left| \frac{\frac{U(zt)-U(t)}{a(t)}-\frac{z^\gamma -1}{\gamma}}{A(t)} \right|^p \frac{\rmd z}{z^2}\right)^{1/p}\longrightarrow\Big(\int_1^\infty \Psi_{\gamma,\rho}(z)^pz^{-2}\rmd z\Big)^{1/p}.
\]
The convergence is proved using  the dominated convergence theorem with dominating function provided by \cite{dHF06}, Theorem B.3.10 p.392: for $\varepsilon,\delta >0$, there exists $t'>0$ such that 
\[
\left| \frac{\frac{U(zt)-U(t)}{a(t)}-\frac{z^\gamma -1}{\gamma}}{A(t)} -\Psi_{\gamma,\rho}(z)\right| \leq \varepsilon z^{\gamma +\rho +\delta}, \quad x >1, t>t',
\]
and the function $z^{(\gamma+\rho+\delta)p}/z^2$ is integrable on $[1,+\infty)$ for $(\gamma+\rho) p <1$ and $\delta$ small enough.

\textit{Proof of $ii)$.} The conditional distribution of the $k$ top order statistics $(Z_{n-k+1,n},\ldots,Z_{n,n})$ given $Z_{n-k,n}=t$ is equal to the distribution of $(t\tilde Z_{1,k},\ldots,t\tilde Z_{k,k})$ with $\tilde Z_{1,k}\leq\cdots\leq \tilde Z_{k,k} $ the order statistics of an  i.i.d. sample $\tilde Z_1,\ldots,\tilde Z_k$ with standard unit Pareto margins. We deduce that $P_{\Pi_{n,k}\mid Z_{n-k,n}=t}$ is equal to the distribution of 
\[
\tilde\Pi_k=\frac{1}{k}\sum_{i=1}^k \varepsilon_{\tilde X_i}\quad \mbox{with $\tilde X_i=(U(t\tilde Z_i)-U(t))/a(t)$}.
\]
Theorem~\ref{theo:wasserstein-emp-measure} implies
\[
W_p^{(2)}\Big{(}P_{\Pi_{n,k}\mid Z_{n-k,n}=t},P_{\Pi_{k}^*}\Big{)} \leq W_p(P_{\tilde X},H_\gamma)=A_p'(t)
\]
because $P_{\tilde X}=P_{(X-U(t))/a(t)\mid T>t}$. 

Under the second order condition \eqref{eq:second-order-condition2}, the first point of this theorem  yields $A_p'(t)\sim c_p'(\gamma, \rho) A(t)$. Since $Z_{n-k,n}= n/k(1+o_P(1))$  and $A_p'$ is regularly varying, we deduce 
\[
W_p^{(2)}\Big{(}P_{\Pi_{n,k} \mid Z_{n-k,n}},P_{\Pi_k^*}\Big{)}=A_p'(Z_{n-k,n})= c_p'(\gamma, \rho) A(n/k)(1+o_P(1)).
\]
\end{proof}

\begin{proof}[Proof of Theorem~\ref{prop:cv-gev-W}] The proof is similar to the proof of Proposition \ref{prop:cv-pareto-W}.\\
\textit{Proof of $i)$.}   The explicit form of the Wasserstein distance on $\mathbb{R}$ recalled in Example~\ref{ex:1} implies
\[
W_p(P_1,P_2)=\Big( \int_{0}^1 | F_1^\leftarrow(u)- F_2^\leftarrow(u)|^p\,\mathrm{d}u\Big)^{1/p}
\]
for all probability measures $P_1,P_2$ on $\mathbb{R}$ with quantile functions $F_1^\leftarrow,F_2^\leftarrow$ respectively. Introducing the change of variable $z=-1/\log(u)$ and the functions $V_i(t)=F^\leftarrow_i(e^{-1/t})$, $i=1,2$, we get
\[
W_p(P_1,P_2)=\Big( \int_{0}^1 | V_1(z)- V_2(z)|^p\,e^{-1/z}\frac{\rmd z}{z^2}\Big)^{1/p}.
\]
When $P_1=P_{(M_1-b_m)/a_m}$ is the distribution of the normalized block maxima and $P_2=G_\gamma$ is the GEV distribution, we get $V_1(z)=(V(mz)-V(m))/a(m)$ and $V_2(m)=(z^\gamma-1)/\gamma$, yielding
\begin{align*}
W_p(P_{(M_1-b_m)/a_m},G_\gamma)&=\left( \int_0^\infty \left|\frac{V(mz)-V(m)}{a(m)}-\frac{z^\gamma-1}{\gamma}\right|^p e^{-1/z}\frac{\rmd z}{z^2}\right)^{1/p}\\
&=A_p''(m),\quad m\geq 1.
\end{align*}
Theorem~\ref{theo:wasserstein-emp-measure} then implies,  in the second order Wasserstein space $\mathcal{W}_p^{(2)}([0,\infty))$,
\[
W_p^{(2)}\Big{(}P_{\Pi_{n,m}},P_{\Pi_k^*}\Big{)}= A_p''(m),\quad 1\leq m\leq n.
\]
\textit{Proof of $ii)$.} We prove that if $F\in\mathcal{D}(G_\gamma)$ with $\gamma<1$ and $p\in [1,1/\gamma_+)$, then $A_p''(m)\to 0$ as $m\to\infty$. By Drees Theorem (see e.g. \cite{dHF06}, Theorem B.2.18 p.383), the first order condition implies  that for all $\varepsilon,\delta>0$, there exists $m_0$ such that
\[
\left|\frac{V(mz)-V(m)}{a(m)}-\frac{z^\gamma-1}{\gamma}\right| \leq \varepsilon \max(z^{\gamma+\delta},z^{\gamma-\delta}) ,\quad m>m_0,  mz>m_0.
\]
We deduce that 
\begin{align*}
&\int_{m_0/m}^\infty \left|\frac{V(mz)-V(m)}{a(m)}-\frac{z^\gamma-1}{\gamma}\right|^p e^{-1/z}\frac{\rmd z}{z^2} \\
&\leq \varepsilon^p \int_0^\infty   \max(z^{(\gamma+\delta)p},z^{(\gamma-\delta)p})e^{-1/z}\frac{\rmd z}{z^2},
\end{align*}
where the last integral is finite for $\delta$ small enough because $\gamma p<1$. On the other hand, the contribution of the interval $[0,m/m_0]$ to the integral defining $A_p''(m)$ is upper bounded as follows. We use the assumption that $F$ has a finite left end point so that $V$ is bounded by $M$ on $[0,1]$, whence
\begin{align*}
&\int_0^{m_0/m} \left|\frac{V(mz)-V(m)}{a(m)}-\frac{z^\gamma-1}{\gamma}\right|^p e^{-1/z}\frac{\rmd z}{z^2} \\
&\leq 2^{p-1}\int_0^{m_0/m}\left( \frac{2M}{a(m)}\right)^pe^{-1/z}\frac{\rmd z}{z^2}+2^{p-1}\int_0^{m_0/m}\left|\frac{z^\gamma-1}{\gamma} \right|^pe^{-1/z}\frac{\rmd z}{z^2} \\
&=  \frac{2^{p-1}(2M)^p}{a(m)^p}e^{-m/m_0} + 2^{p-1}\int_0^{m_0/m}\left|\frac{z^\gamma-1}{\gamma} \right|^pe^{-1/z}\frac{\rmd z}{z^2}.
\end{align*}
Combining these estimates, we get 
\[
A''_p(m)^p= O(\varepsilon^p)+O(a(m)^{-p}e^{-m/m_0})+o(1).  
\]
Since $a$ is regularly varying with index $\gamma$, $a(m)^pe^{-m/m_0}\to 0$ as $m\to\infty$. Letting $m\to\infty$ and $\varepsilon\to 0$, we obtain the convergence  $A_p''(m)\to 0$.\\
\textit{Proof of $iii)$.}
We have
\[
\left(\frac{A_p''(m)}{A(m)}\right)^p=\int_0^\infty \left|\frac{\frac{V(mz)-V(m)}{a(m)}-\frac{z^\gamma-1}{\gamma}}{A(m)} \right|^p e^{-1/z}\frac{\mathrm{d}z}{z^2},
\]
 and the second order condition \eqref{eq:second-order-condition-BM} suggests the limit
 \[
 \left(\frac{A_p''(m)}{A(m)}\right)^p\longrightarrow \int_0^\infty \Psi_{\gamma,\rho}(z)^p e^{-1/z}\frac{\mathrm{d}z}{z^2}=c''_p(\gamma,\rho)^p.
 \]
 The limit is justified as follows. Drees theorem (see \cite{dHF06}, Theorem B.3.10 p.392) implies that for $\varepsilon,\delta>0$, there is $m_0\geq 1$ such that
 \begin{align*}
 &\left| \left(\int_{m_0/m}^\infty \left|\frac{\frac{V(mz)-V(m)}{a(m)}-\frac{z^\gamma-1}{\gamma}}{A(m)} \right|^p e^{-1/z}\frac{\mathrm{d}z}{z^2}\right)^{1/p}
 - \left(\int_{m_0/m}^\infty \Psi_{\gamma,\rho}(z)^p  e^{-1/z}\frac{\mathrm{d}z}{z^2}\right)^{1/p}\right|\\
 &\leq \varepsilon\left(\int_0^\infty \max(z^{p(\gamma+\rho+\delta)},z^{p(\gamma+\rho-\delta)})e^{-1/z}\frac{\mathrm{d}z}{z^2}\right)^{1/p}
 \end{align*}
 where the last integral is finite for $\delta$ small enough because $\gamma p<1$ and $\rho\leq 0$. On the other hand, the contribution of the interval $[0,m_0/m]$ to the integral vanishes as $m\to\infty$ because
\begin{align*}
 &\int_0^{m_0/m} \left|\frac{\frac{V(mz)-V(m)}{a(m)}-\frac{z^\gamma-1}{\gamma}}{A(m)} \right|^p e^{-1/z}\frac{\mathrm{d}z}{z^2}\\
 &\leq  2^{p-1}\left(\frac{2M}{a(m)A(m)}\right)^pe^{-m/m_0}+\frac{2^{p-1}}{A(m)^p}\int_0^{m_0/m}\left|\frac{z^\gamma-1}{\gamma}\right|^p e^{-1/z}\frac{\mathrm{d}z}{z^2}\rightarrow 0.
 \end{align*}
We deduce that $A_p''(m)\sim c_p''(\gamma,\rho)A(m)$ as $m\to\infty$.
\end{proof}

\begin{lemma}\label{lem:W_p-same-weak-limit}
Let $(\mathcal{X},d)$ a polish space. Let $(P_n)_{n\in \mathbb{N}}$ and $(P_n^*)_{n\in \mathbb{N}}$ be two sequences in $\mathcal{W}(\mathcal{X})$ and $P \in \mathcal{W}(\mathcal{X})$. 
If $P_n^*$ converge weakly to $P$ and $W_p(P_n,P_n^*)\rightarrow 0$ then $P_n$ converge weakly to $P$.
\end{lemma}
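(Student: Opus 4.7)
The plan is to establish weak convergence of $P_n$ to $P$ via the standard Portmanteau-type characterization on a Polish space: it suffices to show that $\int f\,\mathrm{d}P_n\to\int f\,\mathrm{d}P$ for every bounded Lipschitz function $f:\mathcal{X}\to\mathbb{R}$. I would split this, by the triangle inequality, as
\[
\Big|\int f\,\mathrm{d}P_n-\int f\,\mathrm{d}P\Big|\leq \Big|\int f\,\mathrm{d}P_n-\int f\,\mathrm{d}P_n^*\Big|+\Big|\int f\,\mathrm{d}P_n^*-\int f\,\mathrm{d}P\Big|,
\]
and handle the two pieces separately.

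The second piece tends to $0$ directly from the hypothesis $P_n^*\Rightarrow P$ since $f$ is bounded continuous. For the first piece, I would bound it by the Wasserstein distance. Picking any coupling $(X,X^*)$ of $(P_n,P_n^*)$,
\[
\Big|\int f\,\mathrm{d}P_n-\int f\,\mathrm{d}P_n^*\Big|=\big|\mathbb{E}[f(X)-f(X^*)]\big|\leq \mathrm{Lip}(f)\,\mathbb{E}[d(X,X^*)],
\]
and taking the infimum over couplings yields $\leq\mathrm{Lip}(f)\,W_1(P_n,P_n^*)$. Jensen's inequality then gives $W_1\leq W_p$, so the first piece is at most $\mathrm{Lip}(f)\,W_p(P_n,P_n^*)\to 0$ by hypothesis. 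Combining both estimates concludes the proof.

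There is no real obstacle here; the only subtlety is a domain check, namely that $W_1(P_n,P_n^*)$ makes sense. This is fine since $P_n,P_n^*\in\mathcal{W}_p(\mathcal{X})\subseteq\mathcal{W}_1(\mathcal{X})$ for $p\geq 1$, which ensures finiteness of the first moments and validates the Jensen step $W_1\leq W_p$. The Polish assumption is used implicitly, through the Portmanteau characterization by bounded Lipschitz test functions.
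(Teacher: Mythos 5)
Your argument is correct and follows essentially the same route as the paper's proof: reduce to bounded Lipschitz test functions via Portmanteau, control $\bigl|\int f\,\mathrm{d}P_n-\int f\,\mathrm{d}P_n^*\bigr|$ by $\mathrm{Lip}(f)$ times the (first-order) Wasserstein distance through a coupling, and invoke $P_n^*\Rightarrow P$ for the remaining term. The only cosmetic difference is that you pass through $W_1\leq W_p$ and the infimum over couplings, whereas the paper picks a near-optimal $W_p$-coupling and uses $\mathbb{E}[d(X_n,X_n^*)]\leq\|d(X_n,X_n^*)\|_{L^p}$ directly.
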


\begin{proof}
	Let $f$ be a real valued bounded Lipschitz function. Let $\varepsilon >0$ and consider $(X_n,X_n^*)$ a coupling between $P_n$ and $P_n^*$ such that 
	\[
	\|d(X_n,X_n^*)\|_{L^p} \leq (1+\varepsilon)W_p(P_n,P_n^*).
	\]
	Since $W_p(P_n,P_n^*)\rightarrow 0$  as $n\to\infty$,  $\|d(X_n,X_n^*)\|_{L^p} \rightarrow 0$ and also $\mathbb{P}[d(X_n,X_n^*)]\rightarrow 0$.  As a consequence, writing $\mathrm{Lip}(f)$ for the Lipschitz constant of $f$,
	\begin{align}\label{eq:proof_lem1}
	\mathbb{E}[|f(X_n)-f(X_n^*)|]&\leq \mathrm{Lip}(f) \mathbb{E}[d(X_n,X_n^*)]. 
	\end{align}
Since $P_n^*\stackrel{d}\rightarrow P$, we have $\mathbb{E}f(X_n^*)\rightarrow  \mathbb{E}f(X)$ where $X\rightsquigarrow P$ and Equation~\eqref{eq:proof_lem1} implies  $\mathbb{E}f(X_n)\rightarrow  \mathbb{E}f(X)$. Since $f$ is an arbitrary bounded Lipschitz function, we conclude by the Portmanteau Theorem that $X_n$ converges weakly to $X$ as $n\to\infty$, that is  $P_n\stackrel{d}\rightarrow P$.
\end{proof}

\bibliographystyle{apalike}
\bibliography{biblio}

\end{document}